 \newtheorem{thm}{Theorem}[section]
 \newtheorem{lem}[thm]{Lemma}
 \newtheorem{prop}[thm]{Proposition}
 \theoremstyle{definition}
 \newtheorem{defn}[thm]{Definition}
 \theoremstyle{remark}
 \newtheorem{rem}[thm]{Remark}
 \numberwithin{equation}{section}
\newtheorem{ex}[thm]{Example}
 \numberwithin{equation}{section}
\newcommand{\pn}{\noindent}
\newcommand{\ZZ}{\mathbb{Z}}
\newcommand{\QQ}{\mathbb{Q}}
\newcommand{\LL}{\mathbb{L}}
\newcommand{\Hom}{\mathrm{Hom}}
\newcommand{\HOM}{\mathrm{HOM}}
\newcommand{\Aut}{\mathrm{Aut}}
\newcommand{\Ext}{\mathrm{Ext}}
\newcommand{\Biext}{\mathrm{Biext}}
\newcommand{\cExt}{\mathcal{E}xt}
\newcommand{\uAut}{\underline{\mathrm{Aut}}}
\newcommand{\bBiext}{\mathrm{\mathbf{Biext}}}
\newcommand{\bPicard}{\mathrm{Picard}}
\newcommand{\bExt}{\mathrm{\mathbf{Ext}}}
\newcommand{\bTors}{\mathrm{\mathbf{Tors}}}
\newcommand{\coker}{\mathrm{coker}}
\newcommand{\im}{\mathrm{im}}
\newcommand{\T}{\mathrm{T}}
\newcommand{\h}{\mathrm{H}}
\newcommand{\R}{\mathrm{R}}
\newcommand{\E}{\mathrm{E}}
\newcommand{\rL}{\mathrm{L}}
\newcommand{\Tot}{\mathrm{Tot}}
\newcommand{\spec}{{\mathrm{Spec}}\,}
\newcommand{\cD}{\mathcal{D}}
\newcommand{\cC}{\mathcal{C}}
\newcommand{\cH}{\mathcal{H}}
\newcommand{\cK}{\mathcal{K}}
\newcommand{\bS}{\textbf{S}}
\newcommand{\eic}{\mathcal{E}}
\newcommand{\pic}{\mathcal{P}}
\newcommand{\qic}{\mathcal{Q}}
\newcommand{\ric}{\mathcal{R}}
\newcommand{\gic}{\mathcal{G}}
\newcommand{\bic}{\mathcal{B}}
\newcommand{\dic}{\mathcal{D}}
\newcommand{\lic}{\mathcal{L}}
\newcommand{\kic}{\mathcal{K}}
\newcommand{\cO}{\mathcal{O}}
\newcommand{\DMeffgm}{\mathrm{DM}^{\mathrm{eff}}_{\mathrm{gm}}}
\begin{document}

\title[biextensions and 1-motives]
{Homological interpretation of extensions and biextensions of 1-motives}

\author{Cristiana Bertolin}

\address{Dip. di Matematica, Universit\`a di Torino, Via Carlo Alberto 10, 
I-10123 Torino}
\email{cristiana.bertolin@googlemail.com}

\subjclass{18G15}

\keywords{extensions, biextensions, 1-motives, tensor products}




\begin{abstract}
Let $k$ be a separably closed field. Let $K_i=[A_i \stackrel{u_i}{\rightarrow} B_i]$
(for $i=1,2,3$) be three 1-motives defined over $k$. 
We define the geometrical notions
of extension of $K_1$ by $K_3$ and of biextension of $(K_1,K_2)$ by $K_3$.
We then compute the homological interpretation of these new geometrical notions: namely, the group ${\Biext}^0(K_1,K_2;K_3)$ of automorphisms of any biextension
of $(K_1,K_2)$ by $K_3$ is canonically isomorphic to the group ${\Ext}^0(K_1 {\buildrel {\scriptscriptstyle \LL} \over \otimes} K_2,K_3)$, and the group ${\Biext}^1(K_1,K_2;K_3)$ of isomorphism
 classes of biextensions of $(K_1,K_2)$ by $K_3$ is canonically isomorphic to the group ${\Ext}^1(K_1 {\buildrel {\scriptscriptstyle \LL} \over \otimes} K_2,K_3).$
\end{abstract}


\maketitle


\tableofcontents

\section*{Introduction}

Let $k$ be a separably closed field and let $S=\spec (k)$. A 1-motive $K=[u:A \rightarrow B]$ over $S$ consists of an $S$-group scheme $A$ which is locally for the \'etale
topology a constant group scheme defined by a finitely generated free
$\ZZ \,$-module, an extension $B$ of an abelian $S$-scheme by an $S$-torus, and a morphism $u:A \rightarrow B$ of $S$-group schemes. Since the field $k$ is separably closed, remark that $A={\ZZ}^r$ with $r \geq 0.$

Let $\bS$ be the big fppf site over $S$.
A 1-motive $K=[u:A \rightarrow B]$ can be viewed also as a complex of abelian sheaves on $\bS$ concentrated in two consecutive degrees. A morphism of 1-motives is a morphism of complexes of commutative $S$-group schemes (see \cite{R}, in particular Lemma 2.3.2)

Let $K_i= [u_i: A_i \rightarrow B_i]$ (for $i=1,2,3$) be three 1-motives defined over $S$. In this paper we introduce the geometrical notions of extension of $K_1$ by $K_3$ and of biextension of $(K_1,K_2)$ by $K_3$.
We then compute the homological interpretation of these new geometrical notions. More precisely, if ${\Biext}^0(K_1,K_2;K_3)$ is the group of automorphisms of any biextension of $(K_1,K_2)$ by $K_3$,
 ${\Biext}^1(K_1,K_2;K_3)$ is the group of isomorphism classes of biextensions of $(K_1,K_2)$ by $K_3$, 
 ${\cExt}^0(K_1,K_3)$ is the group of automorphisms of any extension of $K_1$ by $K_3$, and
 ${\cExt}^1(K_1,K_3)$ is the group of isomorphism classes of extensions of $K_1$ by $K_3$,
 then we prove

\begin{thm}\label{mainthm}
 We have the following canonical isomorphisms

(a) ${\Biext}^1(K_1,K_2;K_3)  \cong   {\Ext}^1(K_1 {\buildrel {\scriptscriptstyle \LL} \over \otimes} K_2,K_3)= 
{\Hom}_{\cD(\bS)}\big(K_1{\buildrel {\scriptscriptstyle \LL} \over \otimes}K_2,K_3[1]\big),$

(b) ${\Biext}^0(K_1,K_2;K_3)     \cong   {\Ext}^0(K_1 {\buildrel {\scriptscriptstyle \LL} \over \otimes} K_2,K_3) = 
{\Hom}_{\cD(\bS)}\big(K_1{\buildrel {\scriptscriptstyle \LL} \over \otimes}K_2,K_3\big), $

(c) ${\cExt}^1(K_1,K_3)  \cong   {\Ext}^1(K_1 ,K_3)= {\Hom}_{\cD(\bS)}(K_1,K_3 [1]),$

(d) ${\cExt}^0(K_1,K_3)     \cong   {\Ext}^0(K_1 , K_3) = {\Hom}_{\cD(\bS)}(K_1, K_3), $\\
where $K_1 {\buildrel{\scriptscriptstyle \LL} \over \otimes}K_2$ is the derived functor of the functor $K_2 \rightarrow K_1 \otimes K_2$ in the derived category
 $\cD(\bS)$ of complexes of abelian sheaves on $\bS$.
\end{thm}

The homological interpretation (c)-(d) of extensions of 1-motives is a special case of the homological interpretation (a)-(b) of biextensions of 1-motives:
in fact, if $K_2=[0 \rightarrow {\ZZ}]$

(1) the category of biextensions of $(K_1, [0 \rightarrow {\ZZ}])$ by $K_3$
is equivalent to the category  of extensions of $K_1$ by $K_3$, and

(2) in the derived category 
$ {\Ext}^i(K_1 {\buildrel {\scriptscriptstyle \LL} \over \otimes} [0 \rightarrow {\ZZ}],K_3) \cong {\Ext}^i(K_1,K_3) $ for $i=0,1$.

Applications of Theorem~\ref{mainthm} are given by the isomorphism
\begin{equation}\label{iso}
    {\Biext}^1(K_1,K_2;K_3) \cong {\Ext}^1(K_1 {\buildrel {\scriptscriptstyle \LL} \over \otimes} K_2,K_3) = {\Hom}_{\cD(\cC)}(K_1 {\buildrel {\scriptscriptstyle \LL} \over \otimes} K_2, K_3[1])
\end{equation}
which makes explicit the link between biextensions and bilinear morphisms. A classical example of this link is given by the Poincar\'e biextension of an abelian variety which defines the Weil pairing on the Tate modules. Other examples are furnished by \cite{Be08} and \cite{BM}, where we prove that
\begin{itemize}
  \item the group of isomorphism classes of biextensions of $(K_1,K_2)$ by $K_3$ is isomorphic to
 the group of morphisms of the category $\mathcal {MHS}$ of mixed Hodge structures from the tensor product ${\T}_{\h}(K_1) \otimes {\T}_{\h}(K_2)$ of the Hodge realizations of $K_1$ and $K_2$ to the Hodge realization ${\T}_{\h}(K_3)$ of $K_3$:
\[
    {\Biext}^1(K_1,K_2;K_3) \cong {\Hom}_{\mathcal{MHS}}({\T}_{\h}(K_1) \otimes {\T}_{\h}(K_2) , {\T}_{\h}(K_3) ).
\]
  \item modulo isogenies the group of isomorphism classes of biextensions of $(K_1,K_2)$ by $K_3$ is isomorphic to
 the group of morphisms of the category ${\mathcal{MR}}_{\ZZ}(k)$ of mixed realizations with integral structure from the tensor product ${\T}(K_1)\otimes {\T}(K_2)$ of the realizations of $K_1$ and $K_2$ to the realization ${\T}(K_3)$ of $K_3$:
\[
    {\Biext}^1(K_1,K_2;K_3) \otimes {\QQ} \cong {\Hom}_{{\mathcal{MR}}_{\ZZ}(k)}\big(
{\T}(K_1)\otimes {\T}(K_2), {\T}(K_3)\big).
\]
Following Deligne's philosophy of motives described in ~\cite{D3} 1.11,
this isomorphism means that the notion of biextensions of 1-motives furnishes the geometrical origin of the morphisms of ${\mathcal{MR}}_{\ZZ}(k)$
from the tensor product of the realizations of two 1-motives to the realization of another 1-motive, which are therefore motivic morphisms.
  \item modulo isogenies the group of isomorphism classes of biextensions of $(K_1,K_2)$ by $K_3$ is isomorphic to
 the group of morphisms of Voevodsky's triangulated category ${\DMeffgm}$ of effective geometrical motives with rational coefficients from the tensor product ${\cO}(K_1) \otimes {\cO}(K_2)$ of the images of $K_1$ and $K_2$ in the category ${\DMeffgm}$ to the image ${\cO}(K_3)$ of $K_3$ in ${\DMeffgm}$:
\[ 
{\Biext}^1(K_1,K_2;K_3) \otimes {\QQ} \cong {\Hom}_{{\DMeffgm} (k,{\QQ})}({\cO}(K_1) \otimes {\cO}(K_2), {\cO}(K_3) ). 
\]
In \cite{BM} we have used Theorem~\ref{mainthm} (a) in order to show the above isomorphism.
 \end{itemize}

In \cite{Be10} and \cite{Be11} we have introduced the notions of extension and biextension for arbitrary length 2 complexes of abelian sheaves and we have computed their homological interpretation. The definitions and the results of \cite{Be10} and \cite{Be11} are a generalization of the definitions and the results of this paper (in particular of Theorem~\ref{mainthm}) to arbitrary length 2 complexes of abelian sheaves.

The idea of the proof of Theorem~\ref{mainthm} is the following one: Let $K=[A \stackrel{u}{\rightarrow} B]$ be a 1-motives and let ${\rL}..$ be a complex of 1-motives $R \rightarrow Q \rightarrow P \rightarrow 0$. To the complex $K$ and to the bicomplex ${\rL}..$ we associate a category $\Psi_{{\rL}..}(K)$ which has the following \emph{homological description}:
\begin{equation}\label{homo-intro}
    \Psi_{{\rL}..}^i(K) \cong {\Ext}^i({\Tot}({\rL}..),K) \qquad (i=0,1)
\end{equation}
where $\Psi_{{\rL}..}^0(K)$ is the group of automorphisms of any object of $\Psi_{{\rL}..}(K)$ and $\Psi_{{\rL}..}^1(K)$ is the group of isomorphism
 classes of objects of $\Psi_{{\rL}..}(K)$. Then, to any 1-motive $K=[A \stackrel{u}{\rightarrow} B]$ we associate a canonical flat partial resolution ${\rL}..(K)$ whose components are direct sums of objects of the kind ${\ZZ}[A]$ and ${\ZZ}[B]$. Here ``partial resolution'' means that we have an isomorphism between the homology groups of $K$ and of this partial resolution only in degree 1 and 0. This is enough
for our goal since only the groups $\Ext^1$ and $\Ext^0$ are involved in the statement of Theorem~\ref{mainthm}. Consider now three 1-motives $K_i$ (for $i=1,2,3$). The categories $\Psi_{{\rL}..(K_1)}(K_3)$ and $\Psi_{{\rL}..(K_1)\otimes {\rL}..(K_2)}(K_3)$ admit the following \emph{geometrical description}:
 \begin{eqnarray}\label{geo-intro}
   \Psi_{{\rL}..(K_1)}(K_3) & \simeq &  {\bExt}(K_1,K_3)  \\
 \nonumber \Psi_{{\rL}..(K_1)\otimes {\rL}..(K_2)}(K_3) & \simeq &  {\bBiext}(K_1,K_2;K_3)
\end{eqnarray}
Putting together this geometrical description~(\ref{geo-intro}) with the homological description~(\ref{homo-intro}), we get the proof of Theorem~\ref{mainthm}.



\section*{Notation}

In this paper, $k$ is a separably closed field, $S=\spec (k)$ and $\bS$ is the big fppf site over $S$.
 If $I$ is a sheaf on $\bS$, we denote by ${\ZZ}[I]$ the free $\ZZ$-module generated by $I$ (see \cite{SGA4} Expos\'e IV 11).

Denote by $\cK(\bS)$ the category of complexes of abelian sheaves on the site $\bS$: all complexes that we consider in this paper are cochain complexes.
Let $\cK^{[-1,0]}(\bS)$ be the subcategory of $\cK(\bS)$ consisting of complexes $K=(K^i)_i$ such that $K^i=0$ for $i \not= -1$ or $0$. The good truncation $ \tau_{\leq n} K$ of a complex $K$ of $\cK(\bS)$ is the following complex: $ (\tau_{\leq n} K)^i= K^i$ for $i <n,  (\tau_{\leq n} K)^n= \ker(d^n)$
and $ (\tau_{\leq n} K)^i= 0$ for $i > n.$ For any $i \in {\ZZ}$, the shift functor $[i]:\cK(\bS) \rightarrow \cK(\bS) $ acts on a complex $K=(K^n)_n$ as $(K[i])^n=K^{i+n}$ and $d^n_{K[i]}=(-1)^{i} d^{n+i}_{K}$.

Denote by $\cD(\bS)$ the derived category of the category of abelian sheaves on $\bS$, and let $\cD^{[-1,0]}(\bS)$ be the subcategory of $\cD(\bS)$ consisting of complexes $K$ such that ${\h}^i (K)=0$ for $i \not= -1$ or $0$. If $K$ and $K'$ are complexes of $\cD(\bS)$, the group ${\Ext}^i(K,K')$ is by definition ${\Hom}_{\cD(\bS)}(K,K'[i])$ for any $i \in {\ZZ}$. Let ${\R}{\Hom}(-,-)$ be the derived functor of the bifunctor ${\Hom}(-,-)$. The cohomology groups\\ ${\h}^i\big({\R}{\Hom}(K,K') \big)$ of 
${\R}{\Hom}(K,K')$ are isomorphic to ${\Hom}_{\cD(\bS)}(K,K'[i])$.

\section{Extensions and biextensions of 1-motives}

Let $G$ be abelian sheaf on $\bS$. A \textbf{$G$-torsor} is a sheaf on $\bS$ endowed with an action of $G$, which is locally isomorphic to $G$ acting on itself by translation.

Let $P,G$ be abelian sheaves on $\bS$. An \textbf{extension of $P$ by $G$} is an exact sequence 
\[0 \longrightarrow G \longrightarrow E \longrightarrow P\longrightarrow0.\]
Since in this paper we consider only commutative extensions, $E$ is in fact an abelian sheaf on $\bS$.
We denote by ${\bExt}(P,G)$ the category of extensions
of $P$ by $G$. It is a classical result that the Baer sum of extensions defines a group law for the objects of the category ${\bExt}(P,G)$, which is therefore a strictly commutative Picard category.

Let $P,G$ be abelian sheaves on $\bS$. Denote by $m:P \times P \rightarrow P$ the group law of $P$ and by $pr_i:P \times P \rightarrow P$ with $i=1,2$ the two projections of $P \times P$  in $P$.
According to~\cite{SGA7} Expos\'e VII 1.1.6 and 1.2, the category of extensions of $P$ by $G$ is equivalent to the category of 4-tuples
$(P,G,E,\varphi)$, where $E$ is a $G_P$-torsor over $P$, and $\varphi: pr_1^*E \wedge pr_2^*E \rightarrow m^* E$ is an isomorphism of torsors over $P\times P$ satisfying some associativity and commutativity conditions (see~\cite{SGA7} Expos\'e VII diagrams (1.1.4.1) and (1.2.1)):
\begin{eqnarray}\label{1.1.6}
\nonumber  {\bExt}(P,G)  &\simeq& \Big\{ (P,G,E,\varphi)~~ \Big|  ~~ E=G_P\mathrm{-torsor~over~} P \mathrm{~and} \\
 &&  \varphi: pr_1^*E \wedge pr_2^*E \cong m^* E \mathrm{~with~ ass.~ and~ comm.~ conditions} \Big\}.
\end{eqnarray}
Here $pr_i^*E$ is the pull-back of $E$ via the projection $pr_i:P \times P \rightarrow P$ for $i=1,2$ and $pr_1^*E \wedge pr_2^*E$ is the contracted product of $pr_1^*E$ and $pr_2^*E$ (see 1.3 Chapter III \cite{G}). 
It will be useful in what follows to look at the isomorphism of torsors
$\varphi$ as an associative and commutative group law on the fibres:
\[ +: E_p ~ E_{p'} \longrightarrow E_{p+p'} \]
where $p,p'$ are points of $P(U)$ with $U$ an $S$-scheme.

Let $I$ be a sheaf on $\bS$ and let $G$ be an abelian sheaf on $\bS$. Concerning extensions of free commutative groups, by~\cite{SGA7} Expos\'e VII 1.4
  the category of extensions of ${\ZZ}[I]$ by $G$ is equivalent to the category of $G_I$-torsors over $I$:
\begin{equation}\label{1.4}
    {\bExt}({\ZZ}[I],G) \simeq {\bTors}(I,G_I).
\end{equation}

Let $P,Q$ and $G$ be abelian sheaves on $\bS$. A \textbf{biextension of $(P,Q)$ by $G$} is a $G_{P\times Q}$-torsor $B$ over $P\times Q$, endowed with a structure of commutative extension of $Q_P$
by $G_P$ and a structure of commutative extension of $P_Q$ by $G_Q,$ which are compatible one with another (for the definition of compatible extensions see~\cite{SGA7} Expos\'e VII D\'efinition 2.1). If $m_P, p_1, p_2$ (resp. $m_Q, q_1, q_2$) denote the three morphisms $P \times P \times Q
\rightarrow P \times Q$ (resp. $P \times Q \times Q
\rightarrow P \times Q$) deduced from the three morphisms $ P \times P
\rightarrow P$ (resp. $Q \times Q \rightarrow Q $) group law, first and second projection, the equivalence of categories~(\ref{1.1.6}) furnishes the following equivalent definition:
 a biextension of $(P,Q)$ by $G$ is a $G_{P\times Q}$-torsor $B$ over $P\times Q$ endowed with two isomorphisms of torsors
\[\varphi: p_1^*E ~ p_2^*E \longrightarrow m_P^* E \qquad \qquad \qquad
\psi:q_1^*E ~ q_2^*E \longrightarrow m_Q^* E\]
over $P \times P \times Q$ and $P \times Q \times Q$ respectively, satisfying some associativity, commutativity and compatible conditions (see~\cite{SGA7} Expos\'e VII diagrams (2.0.5),(2.0.6),(2.0.8),\\
(2.0.9), (2.1.1)). As for extensions, we will look at the isomorphisms of torsors $\varphi$ and $\psi$ as two associative and commutative group laws on the fibres which are compatible with one another:
\[ +_{1}: E_{p,q} ~ E_{p',q} \longrightarrow E_{p+p',q} \qquad \qquad  +_{2}: E_{p,q} ~ E_{p,q'} \longrightarrow E_{p,q+q'}\]
where $p,p'$ (resp. $q,q'$) are points of $P(U)$ (resp. of $Q(U)$) with $U$ any sheaf on $\bS$.

Let $K_i=[u_i:A_i \rightarrow B_i]$ (for $i=1,2$) be two 1-motives defined over $S$.

\begin{defn}\label{def:ext}
 An \textbf{extension $(E,\beta,\gamma)$ of $K_1$ by $K_2$} consists of
\begin{enumerate}
    \item an extension $E$ of $B_1$ by $B_2$;
    \item a trivialization $\beta$ of the extension $u_1^*E$
    of $A_1$ by $B_2$ obtained as pull-back of the extension $E$ via $u_1: A_1 \rightarrow B_1$;
    \item a trivial extension $T=(T, \gamma)$ of $A_1$ by $A_2$ (i.e. an extension $T$ of $A_1$ by $A_2$ endowed with a trivialization $ \gamma $) and an isomorphism of extensions $\Theta : u_{2 \, *} T \rightarrow u_1^*E$ between the push-down via $u_2: A_2 \rightarrow B_2$ of $T$ and $u_1^*E$. Through this isomorphism the trivialization $ u_2 \circ \gamma$ of $u_{2 \, *} T$ is compatible with the trivialization $\beta$ of $u_1^*E$.
\end{enumerate}
\end{defn}
Condition (3) can be rewritten as
\par \quad (3') an homomorphism $\gamma : A_1 \rightarrow A_2$ such that $u_2 \circ \gamma$ is compatible with $\beta.$
Note that to have a trivialization $\beta: A_1 \rightarrow u_1^* E $ of $u_1^* E$ is the same thing as to have a lifting $\widetilde{\beta}: A_1 \rightarrow E $ of $u_1: A_1 \rightarrow B_1$. In fact,
if we denote $p:E \rightarrow B_1$ the canonical surjection of the extension $E$, a morphism $\widetilde{\beta}: A_1 \rightarrow E $ such that $p \circ \widetilde{\beta} =u_1$ induces a splitting $\beta: A_1 \rightarrow u_1^*E$ that composes with $u_1^*E \rightarrow E \stackrel{p}{\rightarrow} B_1$ to $u_1: A_1 \rightarrow B_1$, and vice versa.

\begin{rem}\label{rem:ext-extcomplex}
We can summarize the above definition with the following diagram with exact rows:
\[
\xymatrix{
  0  \ar[r] & B_2 \ar@{=}[d] \ar[r] & E \ar[r]  & B_1 \ar[r] &0 \\
 0  \ar[r] & B_2  \ar[r] & u_1^*E \cong u_{2 *} T \ar[r] \ar[u] & A_1 \ar@/^/[l]^{\beta} \ar[u]_{u_1} \ar@{=}[d]\ar[r]&0 \\
 0  \ar[r] & A_2 \ar[u]^{u_2} \ar[r] & T  \ar[r] \ar[u] & \ar@/^/[l]^{\gamma} A_1 \ar[r] &0 
}
\]
In particular, we observe that the short sequence of complexes in $\cK(\bS)$ 
$$0 \longrightarrow K_2 \longrightarrow [T \rightarrow E] \longrightarrow K_1 \longrightarrow 0$$
is exact. On the other hand if $0 \rightarrow K_2 \rightarrow G \rightarrow K_1 \rightarrow 0$ is a short exact sequence of $\cK(\bS)$, then the complex $G$ is an extension of 1-motives of $K_1$ by $K_2$ as defined in Definition \ref{def:ext}, i.e. $G$ is a complex of the kind $[T \rightarrow E]$, with $T$ a trivial extension of $A_1$ by $A_2$ and $E$ an extension of $B_1$ by $B_2$. In fact, over a separably closed field the groups ${\Ext}^1(A_1,A_2)$ and  
${\Ext}^1(A_1,B_2)$ are trivial. 
\end{rem}

Let $K_i=[A_i \stackrel{u_i}{\rightarrow}B_i]$ and
$K'_i=[A'_i \stackrel{u'_i}{\rightarrow}B'_i]$ (for $i=1,2$) be 1-motives defined over $S$. Let $(E,\beta,\gamma)$ be an extension of $K_1$ by $K_2$ and let $(E',\beta',\gamma')$ be an extension of $K'_1$ by $K'_2$.

\begin{defn}
 A \textbf{morphism of extensions}
\[(\underline{F},\underline{\Upsilon},\underline{\Phi} ): (E,\beta,\gamma)
\longrightarrow (E',\beta',\gamma')\]
\pn consists of
\begin{enumerate}
    \item a morphism $\underline{F}=(F,f_1,f_2):E \rightarrow E'$ from the extension $E$ to the extension $E'$. In particular, $F: E \rightarrow E'$
        is a morphism of the sheaves underlying $E$ and $E'$, and
        \[ f_1:B_1 \longrightarrow B'_1 \qquad f_2:B_2 \longrightarrow B'_2 \]
        \pn are morphisms of abelian sheaves on $\bS$;
    \item a morphism of extensions $\underline{\Upsilon} = (\Upsilon,g_1,f_2):u_1^*E \rightarrow {u'_1}^* E'$
        compatible with the morphism $\underline{F}=(F,f_1,f_2)$ and with the trivializations $\beta$ and $\beta'$. In particular, $\Upsilon:u_1^*E \rightarrow {u'_1}^* E'$ is a morphism of the sheaves underlying $u_1^*E$ and ${u'_1}^*E'$, and
        \[g_1: A_1 \longrightarrow A'_1 \]
         \pn is an morphism of abelian sheaves on $\bS$;
         \item a morphism of extensions $\underline{\Phi} = (\Phi,g_1,g_2):T \rightarrow T'$
        compatible with the morphism $\underline{\Upsilon} = (\Upsilon,g_1,f_2)$ and with the trivializations $\gamma$ and $\gamma'$. In particular, $\Phi:T \rightarrow T'$ is a morphism of the sheaves underlying $T$ and $T'$, and
        \[g_2: A_2 \longrightarrow A'_2 \]
         \pn is an morphism of abelian sheaves on $\bS$.
\end{enumerate}
\end{defn}

Condition (3) can be rewritten as 
\par \quad (3') an morphism $g_2:A_2 \rightarrow A'_2$ of abelian sheaves on $\bS$ compatible with $u_2$ and $u'_2$ (i.e. $u'_2 \circ g_2 =f_2 \circ u_2$) and such that
    \[ \gamma' \circ g_1 = g_2 \circ \gamma.\]
Explicitly, the compatibility of $\underline{\Upsilon}$ with $\underline{F}$, $\beta$ and $\beta'$ means that the following diagram is commutative:
\[\begin{array}{ccccc}
      A_1 & \stackrel{\beta}{\longrightarrow}  &   u_1^*E & \longrightarrow & E \\
  \scriptstyle{g_1}    \downarrow & & \scriptstyle{\Upsilon}    \downarrow & & \downarrow \scriptstyle{F}\\
       A'_1  & \stackrel{\beta'}{\longrightarrow}  &  {u'_1}^* E'  &  \longrightarrow  & E'. \\
      \end{array}
\]
The compatibility of $\underline{\Phi}$ with $\underline{\Upsilon}$, $\gamma$ and $\gamma'$ means that the following diagram is commutative:
\[\begin{array}{ccccccc}
      A_1 & \stackrel{\gamma}{\longrightarrow}  &  T & \longrightarrow &u_{2\,*} T & \stackrel{\Theta} \cong &  u_1^*E \\
  \scriptstyle{g_1}    \downarrow & & \scriptstyle{\Phi}    \downarrow & && & \downarrow \scriptstyle{\Upsilon}\\
       A'_1  & \stackrel{\gamma'}{\longrightarrow}  &  T' & \longrightarrow &u'_{2\,*} T' & \stackrel{\Theta'} \cong & {u'_1}^* E'. \\
      \end{array}
\]

We denote by ${\bExt}(K_1,K_2)$ the category of extensions
of $K_1$ by $K_2$. As for extensions of abelian sheaves, it is possible to define the Baer sum of extensions of 1-motives. This notion of sum furnishes a group law for the objects of the category ${\bExt}(K_1,K_2)$ which is therefore a strictly 
commutative Picard category (see~\cite{SGA7} Expos\'e VII 2.5).
The zero object $(E_0,\beta_0,\gamma_0)$ of ${\bExt}(K_1,K_2)$ with respect to this group law consists of
\begin{itemize}
  \item the trivial extension $E_0=B_1 \times B_2$ of $B_1$ by $B_2$, i.e. the zero object of ${\bExt}(B_1,B_2)$, and
  \item the trivialization $\beta_0=(id_{A_1},0)$ of the extension $u_1^*E_0=A_1 \times B_2$ of $A_1$ by $B_2$. We can consider $\beta_0$ as the lifting $(u_1,0): A_1 \rightarrow B_1 \times B_2$ of $u_1:A_1 \rightarrow B_1$.
  \item the trivial extension $T_0 $ of $A_1$ by $A_2$ (i.e. $T_0=(T_0, \gamma_0)$ with  $T_0= A_1 \times A_2$ and $\gamma_0=(id_{A_1},0)$) and the isomorphism of extension $\Theta_0 =(id_{A_1},id_{B_2}): u_{2 \, *} T_0 \rightarrow u_1^*E_0$.
\end{itemize}
Denote by ${\cExt}^0(K_1,K_2)$ the group of automorphisms of any object $(E,\beta, \gamma)$ of ${\bExt}(K_1,K_2)$. It is canonically isomorphic 
to the group of automorphisms ${\Aut}(E_0,\beta_0, \gamma_0)$ of the zero object $(E_0,\beta_0,\gamma_0)$ of ${\bExt}(K_1,K_2)$: 
to an automorphism $(\underline{F},\underline{\Upsilon},\underline{\Phi})$ of $(E_0,\beta_0, \gamma_0)$ the canonical isomorphism associates the automorphism $(\underline{F},\underline{\Upsilon},\underline{\Phi}) + id_{(E,\beta,\gamma)}$
of $(E_0,\beta_0,\gamma_0)+ (E,\beta,\gamma) \cong (E,\beta,\gamma)$.
 Expli\-ci\-tly, ${\cExt}^0(K_1,K_2)$ consists of the couple $(f_0,f_1)$ where
\begin{itemize}
  \item $f_0:B_1 \rightarrow B_2$ is an automorphism of the trivial extension $E_0$ (i.e. $f_0 \in {\Aut}(E_0)={\Ext}^0(B_1,B_2)$), and
  \item $f_1:A_1 \rightarrow A_2$ is an automorphism of the trivial extension $T_0$ (i.e. $f_1 \in {\Aut}(T_0)={\Ext}^0(A_1,A_2)$) such that, via the isomorphism of extensions $\Theta_0: u_{2 \,*}T_0 \rightarrow u_1^*E_0$, the push-down $u_{2 \,*} f_1$ of the automorphism $f_1$ of $T_0$ is compatible with the pull-back $u_1^*f_0$ of the automorphism $f_0$ of $E_0$, i.e. $u_2 \circ f_1= f_0 \circ u_1$.
\end{itemize}
We have therefore the canonical isomorphism
\[{\cExt}^0(K_1,K_2) \cong {\Hom}_{\cK(\bS)}(K_1,K_2).\]
The group law of the category ${\bExt}(K_1,K_2)$ induces a group law on the set of isomorphism classes of objects 
of ${\bExt}(K_1,K_2)$ which we denote by ${\cExt}^1(K_1,K_2)$.

Let $K_i=[u_i:A_i \rightarrow B_i]$ (for $i=1,2,3$) be three 1-motives defined over $S$.

\begin{defn}\label{biext}
A \textbf{biextension $({\mathcal{B}}, \Psi_1, \Psi_2,\lambda)$ of $(K_1,K_2)$ by $K_3$} consists of
\begin{enumerate}
    \item a biextension $\mathcal{B}$ of $(B_1, B_2)$ by $B_3$;
    \item a trivialization 
    $$\Psi_1: A_1\times B_2 \longrightarrow (u_1,id_{B_2})^*{\mathcal{B}}$$
     of the biextension $(u_1,id_{B_2})^*{\mathcal{B}}$ of $(A_1, B_2)$ by $B_3$ obtained as pull-back of ${\mathcal{B}}$ via $(u_1,id_{B_2}): A_1 \times B_2 \rightarrow B_1 \times B_2$, and a trivialization 
     $$\Psi_2 : B_1 \times A_2 \longrightarrow (id_{B_1},u_2)^*{\mathcal{B}}$$
      of the biextension $(id_{B_1},u_2)^*{\mathcal{B}}$ of $(B_1, A_2)$ by $B_3$ obtained as pull-back of ${\mathcal{B}}$ via $(id_{B_1}, u_2): B_1 \times A_2 \rightarrow B_1 \times B_2$. These two trivializations $\Psi_1$ and $\Psi_2$ have to coincide over $A_1 \times A_2$;
    \item a trivial biextension $\mathcal{T}_1=(\mathcal{T}_1, \lambda_1)$ of $(A_1, B_2)$ by $A_3$,
    an isomorphism of biextensions 
    $$\Theta_1 : u_{3 \, *} \mathcal{T}_1 \longrightarrow (u_1,id_{B_2})^*{\mathcal{B}}$$
     between the push-down via $u_3: A_3 \rightarrow B_3$ of $\mathcal{T}_1$ and $(u_1,id_{B_2})^*{\mathcal{B}}$, a trivial biextension $\mathcal{T}_2=(\mathcal{T}_2, \lambda_2)$ of $(B_1, A_2)$ by $A_3$ and an isomorphism of biextensions 
    $$\Theta_2 : u_{3 \, *} \mathcal{T}_2 \longrightarrow (id_{B_1},u_2)^*{\mathcal{B}}$$
     between the push-down via $u_3: A_3 \rightarrow B_3$ of $\mathcal{T}_2$ and $(id_{B_1},u_2)^*{\mathcal{B}}$.
     Through the isomorphism $\Theta_1 $ the trivialization $ u_3 \circ \lambda_1$ of $u_{3 \, *} \mathcal{T}_1$ is compatible with the trivialization $\Psi_1$ of $(u_1,id_{B_2})^*{\mathcal{B}}$, and through the isomorphism $\Theta_2 $ the trivialization $ u_3 \circ \lambda_2$ of $u_{3 \, *} \mathcal{T}_2$ is compatible with the trivialization $\Psi_2$ of $(id_{B_1},u_2)^*{\mathcal{B}}$.
      The two trivializations $\lambda_1$ and $\lambda_2$ have to coincide over $A_1 \times A_2$, i.e.
     $(id_{A_1},u_2)^*\mathcal{T}_1 =(u_1,id_{A_2})^*\mathcal{T}_2$ (we will denote this biextension by $\mathcal{T}=(\mathcal{T},\lambda)$ with $\lambda$ the restriction of the trivializations $\lambda_1$ and $\lambda_2$ over $A_1 \times A_2$). Moreover, we require an isomorphism of biextensions 
     $$\Theta: u_{3 \, *} \mathcal{T} \longrightarrow (u_1,u_2)^*{\mathcal{B}}$$
     which is compatible with the isomorphisms $\Theta_1$ and $\Theta_2$ and through which 
     the trivialization $ u_3 \circ\lambda$ of $u_{3 \, *} \mathcal{T}$ is compatible with the restriction $\Psi$ of the trivializations $\Psi_1$ and $\Psi_2$ over $A_1 \times A_2$.
   \end{enumerate}
\end{defn}

Condition (3) can be rewritten as 
\par \quad (3') an morphism $\lambda: A_1 \otimes A_2 \rightarrow A_3$ such that $u_3 \circ \lambda $ is compatible with $\Psi$.\\
 
Let $K_i=[u_i:A_i \rightarrow B_i]$ and
$K'_i=[u'_i:A'_i \rightarrow B'_i]$ (for $i=1,2,3$) be 1-motives defined over $S$. Let $({\mathcal{B}},\Psi_{1}, \Psi_{2},\lambda)$ be a biextension of $(K_1,K_2)$ by $K_3$ and let $({\mathcal{B}}',\Psi'_{1}, \Psi'_{2},\lambda')$ be a biextension of
 $(K'_1,K'_2)$ by $K'_3$.

\begin{defn}
A \textbf{morphism of biextensions}
\[(\underline{F},\underline{\Upsilon}_1,\underline{\Upsilon}_2,\underline{\Phi}):
({\mathcal{B}},\Psi_{1}, \Psi_{2},\lambda)
\longrightarrow ({\mathcal{B}}',\Psi'_{1}, \Psi'_{2},\lambda')\]
\pn consists of
\begin{enumerate}
    \item a morphism $\underline{F}=(F,f_1,f_2,f_3):{\mathcal{B}} \rightarrow {\mathcal{B}}'$ from the biextension ${\mathcal{B}}$ to the biextension ${\mathcal{B}}'$. In particular, $F: {\mathcal{B}} \rightarrow {\mathcal{B}}'$ is a morphism of the sheaves underlying ${\mathcal{B}}$ and ${\mathcal{B}}'$, and
        \[ f_1:B_1 \longrightarrow B'_1 \qquad f_2:B_2 \longrightarrow B'_2 \qquad f_3:B_3 \longrightarrow B'_3\]
    \pn are morphisms abelian sheaves on $\bS$.
    \item a morphism of biextensions
    \[\underline{\Upsilon}_1 = (\Upsilon_1,g_1,f_2,f_3):(u_1, id_{B_2})^* {\mathcal{B}} \longrightarrow (u'_1, id_{B'_2})^* {\mathcal{B}}'\]
    \pn compatible with the morphism $\underline{F}=(F,f_1,f_2,f_3)$ and with the trivializations $\Psi_{1}$ and $\Psi'_{1}$, and  a morphism of biextensions
    \[\underline{\Upsilon}_2 = (\Upsilon_2,f_1,g_2,f_3):(id_{B_1},u_2)^* {\mathcal{B}} \longrightarrow (id_{B'_1},u'_2)^* {\mathcal{B}}'\]
    \pn compatible with the morphism $\underline{F}=(F,f_1,f_2,f_3)$ and with the trivializations $\Psi_{2}$ and $\Psi'_{2}$. In particular, $\Upsilon_1: (u_1, id_{B_2})^* {\mathcal{B}} \rightarrow (u'_1, id_{B'_2})^* {\mathcal{B}}'$ is a morphism of the sheaves underlying $(u_1, id_{B_2})^*{\mathcal{B}}$ and $(u'_1, id_{B'_2})^* {\mathcal{B}}'$, $\Upsilon_2: (id_{B_1},u_2)^* {\mathcal{B}} \rightarrow (id_{B'_1},u'_2)^* {\mathcal{B}}'$ is a morphism of the sheaves underlying $ (id_{B_1},u_2)^* {\mathcal{B}} $ and $(id_{B'_1},u'_2)^* {\mathcal{B}}'$, and
    \[g_1: A_1 \longrightarrow A'_1 \qquad g_2: A_2 \longrightarrow A'_2  \]
    \pn are morphisms abelian sheaves on $\bS$.
    \pn By pull-back, the two morphisms $\underline{\Upsilon}_1=(\Upsilon_1,g_1,f_2,f_3)$ and $\underline{\Upsilon}_2=(\Upsilon_2,f_1,g_2,f_3)$ define a morphism of biextensions
    $$\underline{\Upsilon}=(\Upsilon,g_1,g_2,f_3): (u_1,u_2)^* {\mathcal{B}} \longrightarrow (u'_1, u'_2)^* {\mathcal{B}}'$$
     compatible with the morphism $\underline{F}=(F,f_1,f_2,f_3)$ and with the trivializations $\Psi$ (restriction of $\Psi_1$ and $\Psi_2$ over $A_1 \times A_2$) and $\Psi'$ (restriction of $\Psi'_1$ and $\Psi'_2$ over $A'_1 \times A'_2$).
    \item a morphism of biextensions
    \[\underline{\Phi}_1 = (\Phi_1,g_1,f_2,g_3): {\mathcal{T}}_1 \longrightarrow  {\mathcal{T}}'_1\]
    \pn compatible with the morphism $\underline{\Upsilon}_1=(\Upsilon,g_1,f_2,f_3)$ and with the trivializations $\lambda_{1}$ and $\lambda'_{1}$, and  a morphism of biextensions
    \[\underline{\Phi}_2 = (\Phi_2,f_1,g_2,g_3): {\mathcal{T}}_2 \longrightarrow {\mathcal{T}}'_2\]
    \pn compatible with the morphism $\underline{\Upsilon}_2 = (\Upsilon_2,f_1,g_2,f_3)$ and with the trivializations $\lambda_{2}$ and $\lambda'_{2}$. In particular, $\Phi_1: {\mathcal{T}}_1 \rightarrow {\mathcal{T}}'_1$ is a morphism of the sheaves underlying ${\mathcal{T}}_1$ and ${\mathcal{T}}'_1$, $\Phi_2: {\mathcal{T}}_2 \rightarrow {\mathcal{T}}'_2$ is a morphism of the sheaves underlying $ {\mathcal{T}}_2 $ and ${\mathcal{T}}'_2$, and
    \[g_3: A_3 \longrightarrow A'_3   \]
    \pn is an morphism abelian sheaves on $\bS$.
    \pn By pull-back, the two morphisms $\underline{\Phi}_1 = (\Phi_1,g_1,f_2,g_3)$ and $\underline{\Phi}_2 = (\Phi_2,f_1,g_2,g_3)$ define a morphism of biextensions
    $$\underline{\Phi} = (\Phi,g_1,g_2,g_3):  {\mathcal{T}} \longrightarrow {\mathcal{T}}'$$
     compatible with the morphism $\underline{\Upsilon}=(\Upsilon,g_1,g_2,f_3)$ and with the trivializations $\lambda$ (restriction of $\lambda_1$ and $\lambda_2$ over $A_1 \times A_2$) and $\lambda'$
    (restriction of $\lambda'_1$ and $\lambda'_2$ over $A_1 \times A_2$).
\end{enumerate}
\end{defn}

Condition (3) can be rewritten as 
\par \quad (3') an morphism $g_3:A_3 \rightarrow A'_3$ abelian sheaves on $\bS$ compatible with $u_3$ and $u'_3$ (i.e. $u'_3 \circ g_3 =f_3 \circ u_3$) and such that
    \[ \lambda' \circ (g_1 \times g_2)= g_3 \circ \lambda.\]
Explicitly, the compatibility of $\underline{\Upsilon}_1$ with $\underline{F}$, $\Psi_1$ and $\Psi'_1$ means that the following diagram is commutative:
\[\begin{array}{ccccc}
      A_1 \times B_2& \stackrel{\Psi_1}{\longrightarrow}  &   (u_1, id_{B_2})^* {\mathcal{B}} & \longrightarrow & {\mathcal{B}}\\
  \scriptstyle{g_1 \times f_2}    \downarrow & & \scriptstyle{\Upsilon_1}    \downarrow & & \downarrow \scriptstyle{F}\\
       A'_1 \times B'_2 & \stackrel{\Psi'_1}{\longrightarrow}  &  (u'_1, id_{B'_2})^* {\mathcal{B}}' &  \longrightarrow  &  {\mathcal{B}}' . \\
      \end{array}
\]
The compatibility of $\underline{\Upsilon}_2$ with $\underline{F}$, $\Psi_2$ and $\Psi'_2$ means that the following diagram is commutative:
\[\begin{array}{ccccc}
      B_1 \times A_2& \stackrel{\Psi_2}{\longrightarrow}  &   (id_{B_1},u_2)^* {\mathcal{B}} & \longrightarrow & {\mathcal{B}}\\
  \scriptstyle{f_1 \times g_2}    \downarrow & & \scriptstyle{\Upsilon_2}    \downarrow & & \downarrow \scriptstyle{F}\\
       B'_1 \times A'_2 & \stackrel{\Psi'_2}{\longrightarrow}  &  (id_{B'_1},u'_2)^* {\mathcal{B}}' &  \longrightarrow  &  {\mathcal{B}}' . \\
      \end{array}
\]
The compatibility of $\underline{\Upsilon}$ with $\underline{F}$, $\Psi$ and $\Psi'$ means that the following diagram is commutative:
\[\begin{array}{ccccc}
      A_1 \times A_2& \stackrel{\Psi}{\longrightarrow}  &   (u_1,u_2)^* {\mathcal{B}} & \longrightarrow & {\mathcal{B}}\\
  \scriptstyle{g_1 \times g_2}    \downarrow & & \scriptstyle{\Upsilon}    \downarrow & & \downarrow \scriptstyle{F}\\
       A'_1 \times A'_2 & \stackrel{\Psi'}{\longrightarrow}  &  (u'_1,u'_2)^* {\mathcal{B}}' &  \longrightarrow  &  {\mathcal{B}}' . \\
      \end{array}
\]
The compatibility of $\underline{\Phi}_1$ with $\underline{\Upsilon}_1$, $\lambda_1$ and $\lambda'_1$ means that the following diagram is commutative:
\[\begin{array}{ccccccc}
      A_1 \times B_2& \stackrel{\lambda_1}{\longrightarrow}  &  {\mathcal{T}}_1 & \longrightarrow &u_{3\,*} {\mathcal{T}}_1 & \stackrel{\Theta_1} \cong & (u_1, id_{B_2})^*{\mathcal{B}}\\
  \scriptstyle{g_1 \times f_2}    \downarrow & & \scriptstyle{\Phi_1}    \downarrow & &&& \downarrow \scriptstyle{\Upsilon_1}\\
       A'_1 \times B'_2 & \stackrel{\lambda'_1}{\longrightarrow}  &  {\mathcal{T}}'_1 & \longrightarrow &u'_{3\,*} {\mathcal{T}}'_1 & \stackrel{\Theta'_1} \cong &  (u'_1, id_{B'_2})^*{\mathcal{B}}' . \\
      \end{array}
\]
The compatibility of $\underline{\Phi}_2$ with $\underline{\Upsilon}_2$, $\lambda_2$ and $\lambda'_2$ means that the following diagram is commutative:
\[\begin{array}{ccccccc}
      B_1 \times A_2& \stackrel{\lambda_2}{\longrightarrow}  &    {\mathcal{T}}_2 & \longrightarrow &u_{3\,*} {\mathcal{T}}_2 & \stackrel{\Theta_2} \cong & (id_{B_1},u_2)^*{\mathcal{B}}\\
  \scriptstyle{f_1 \times g_2}    \downarrow & & \scriptstyle{\Phi_2}    \downarrow & &&& \downarrow \scriptstyle{\Upsilon_2}\\
       B'_1 \times A'_2 & \stackrel{\lambda'_2}{\longrightarrow}  &   {\mathcal{T}}'_2 & \longrightarrow &u'_{3\,*} {\mathcal{T}}'_2 & \stackrel{\Theta'_2} \cong & (id_{B'_1},u'_2)^* {\mathcal{B}}' . \\
      \end{array}
\]
Finally, the compatibility of $\underline{\Phi}$ with $\underline{\Upsilon}$, $\lambda$ and $\lambda'$ means that the following diagram is commutative:
\[\begin{array}{ccccccc}
      A_1 \times A_2& \stackrel{\lambda}{\longrightarrow}  &  {\mathcal{T}} & \longrightarrow &u_{3\,*} {\mathcal{T}} & \stackrel{\Theta} \cong &  (u_1,u_2)^* {\mathcal{B}}\\
  \scriptstyle{g_1 \times g_2}    \downarrow & & \scriptstyle{\Phi}    \downarrow & && & \downarrow \scriptstyle{\Upsilon}\\
       A'_1 \times A'_2 & \stackrel{\lambda'}{\longrightarrow}  &   {\mathcal{T}}' & \longrightarrow &u'_{3\,*} {\mathcal{T}}' & \stackrel{\Theta'} \cong & (u'_1,u'_2)^*{\mathcal{B}}' . \\
      \end{array}
\]
We denote by ${\bBiext}(K_1,K_2;K_3)$ the category of biextensions
of $(K_1,K_2)$ by $K_3$. 
 The Baer sum of extensions defines a group law for the objects of this category which is therefore 
a strictly commutative Picard category (see~\cite{SGA7} Expos\'e VII 2.5).  The zero object $({\mathcal{B}}_0, \Psi_{01}, \Psi_{02},\lambda_0)$ of ${\bBiext}(K_1,K_2;K_3)$ with respect to this group law consists of
\begin{itemize}
  \item the trivial biextension ${\mathcal{B}}_0=B_1 \times B_2 \times B_3$ of $(B_1,B_2)$ by $B_3$, i.e. the zero object of ${\bBiext}(B_1,B_2;B_3)$, and
  \item the trivialization $\Psi_{01}=(id_{A_1},id_{B_2},0)$ (resp.
 $\Psi_{02}=(id_{B_1},id_{A_2},0)$) of the biextension $(u_1,id_{B_2})^*{\mathcal{B}}_0=A_1 \times B_2 \times B_3$ of $(A_1,B_2)$ by $B_3$ (resp. of the biextension $(id_{B_1},u_2)^*{\mathcal{B}}_0=B_1 \times A_2 \times B_3$ of $(B_1\times A_2)$ by $B_3$),
   \item the trivial biextension $\mathcal{T}_{10}$ of $(A_1, B_2)$ by $A_3$ (i.e. ${\mathcal{T}}_{10}=({\mathcal{T}}_{10}, \lambda_{10})$ with ${\mathcal{T}}_{10}= A_1 \times B_2 \times A_3$ and $\lambda_{10}=(id_{A_1},id_{B_2},0)$), the isomorphism of biextensions $\Theta_{10}=(id_{A_1}, id_{B_2},id_{B_3}):u_{3 \,*} 
 \mathcal{T}_{10} \rightarrow (u_1,id_{B_2})^*{\mathcal{B}}_0$,
 the trivial biextension $\mathcal{T}_{01}$ of $(B_1, A_2)$ by $A_3$ (i.e. $\mathcal{T}_{01}=(\mathcal{T}_{01}, \lambda_{01})$ with ${\mathcal{T}}_{10}= B_1 \times A_2 \times A_3$ and $\lambda_{01}=(id_{B_1},id_{A_2},0))$ and the isomorphism of biextensions $\Theta_{01}=(id_{B_1}, id_{A_2},id_{B_3}):u_{3 \,*} 
 \mathcal{T}_{10} \rightarrow (id_{B_1},u_2)^*{\mathcal{B}}_0$. In particular the restriction of $\lambda_{10}$ and $\lambda_{01}$ over $A_1 \times A_2$ is $\lambda_{0}=(id_{A_1},id_{A_2},0)$
\end{itemize}
We denote by ${\Biext}^0(K_1,K_2;K_3)$ the group of automorphisms of any object of ${\bBiext}(K_1,K_2;K_3)$. It is canonically isomorphic to the group of 
automorphisms of the zero object $({\mathcal{B}}_0, \Psi_{01}, \Psi_{02},\lambda_0)$. Explicitly, ${\Biext}^0(K_1,K_2;K_3)$ consists of the couple $(f_0,(f_{10},f_{01}))$ where
\begin{itemize}
  \item $f_{0}:B_1 \otimes B_2 \rightarrow B_3$ is an automorphism of the trivial biextension ${\mathcal{B}}_0$ (i.e. $f_0 \in {\Biext}^0(B_1,B_2;B_3)$), and
  \item $f_{10}:A_1 \otimes B_2 \rightarrow A_3$ is an automorphism of the trivial biextension ${\mathcal{T}}_{10}$ (i.e. $f_{10} \in {\Biext}^0(A_1,B_2;A_3)$) and $f_{01}:B_1 \otimes A_2 \rightarrow A_3$ is an automorphism of the trivial biextension ${\mathcal{T}}_{01}$ (i.e. $f_{01} \in {\Biext}^0(B_1,A_2;A_3)$) 
  such that, via the isomorphisms of biextensions 
   $\Theta_{10}: u_{3 \, *} \mathcal{T}_{10} \rightarrow (u_1,id_{B_2})^*{\mathcal{B}}_0$
    and 
    $\Theta_{01}: u_{3 \, *}\mathcal{T}_{01} \rightarrow (id_{B_1},u_2)^*{\mathcal{B}}_0$,
   the push-down $u_{3 \, *} f_{10}$ of $ f_{10}$ is compatible with the pull-back $(u_1,id_{B_2})^*f_0$ of $f_0$, and the push-down $u_{3 \, *} f_{01}$ of $f_{01}$ is compatible with the pull-back $(id_{B_1},u_2)^*f_0$ of $f_0$, i.e. such that the following diagram commute
  \[    \xymatrix{
      A_1 \otimes B_2 + B_1 \otimes A_2  \qquad \qquad \ar[r]^{(u_1,id)+(id,u_2)} \ar[d]_{f_{10}+f_{01}} & \quad B_1 \otimes B_2 \ar[d]^{f_{0}}\\
       A_3 \ar[r]_{u_3}   & B_3 .
}\]  
\end{itemize}
We have therefore the canonical isomorphism
\[{\Biext}^0(K_1,K_2;K_3) \cong {\Hom}_{\cK(\bS)}(K_1{\buildrel {\scriptscriptstyle \LL}
 \over \otimes}\,K_2, K_3).\]
The group law of the category ${\bBiext}(K_1,K_2;K_3)$ induces a group law on the set of isomorphism classes of objects of ${\bBiext}(K_1,K_2;K_3)$, 
that we denote by ${\Biext}^1(K_1,K_2;K_3)$.

\begin{rem}
 According to the above geometrical definitions of extensions and biextensions of 1-motives, we have the following equivalence of categories
 \[ {\bBiext}(K_1,[0 \rightarrow {\ZZ}];K_3) \simeq {\bExt}(K_1,K_3).\]
Moreover we have also the following isomorphisms
\[ {\Biext}^i(K_1,[{\ZZ} \rightarrow 0];K_3) = \left\{
        \begin{array}{ll}
          {\Hom}(B_1,A_3), & \hbox{$i=0$;} \\
          {\Hom}(K_1,K_3), & \hbox{$i=1$.}
        \end{array}
\right.\]
 Note that we get the same results applying the homological interpretation of biextensions furnished by our main Theorem~\ref{mainthm}.
\end{rem}

\section{Review on strictly commutative Picard stacks}

Let {\bS} be a site. For the notions of {\bS}-pre-stack, {\bS}-stack and morphisms of {\bS}-stacks we refer to \cite{G} Chapter II 1.2.

A \textbf{strictly commutative Picard {\bS}-stack} is an {\bS}-stack of groupoids $\pic$ endowed with a functor $ +: \pic \times_{\bS} \pic \rightarrow \pic, ~~(a,b) \mapsto a+b$, and two 
  natural isomorphisms of associativity $\sigma$ and of commutativity $\tau$, which are described by the functorial isomorphisms
\begin{eqnarray}
\label{ass}  \sigma_{a,b,c} &:& (a + b) + c \stackrel{\cong}{\longrightarrow} a + ( b + c) \qquad \forall~ a,b,c \in \pic, \\
\label{com} \tau_{a,b} &:& a + b \stackrel{\cong}{\longrightarrow} b + a \qquad \forall~ a,b \in \pic;
\end{eqnarray}
such that for any object $U$ of $\bS$, $(\pic (U),+,\sigma, \tau)$ is a strictly commutative Picard category (i.e. it is possible to make the sum of two objects of $\pic (U)$ and this sum is associative and commutative, see \cite{SGA4} 1.4.2 for more details). Here "strictly" means that $\tau_{a,a}$ is the identity 
for all $a \in \pic$.
Any strictly commutative Picard {\bS}-stack admits a global neutral object $e$ and the sheaf of automorphisms of the neutral object ${\uAut}(e)$ is abelian.

Let $\pic$ and $\qic$ be two strictly commutative Picard {\bS}-stacks.
 An \textbf{additive functor} $(F,\sum):\pic \rightarrow \qic $
between strictly commutative Picard {\bS}-stacks is a morphism of {\bS}-stacks (i.e. a cartesian {\bS}-functor, see \cite{G} Chapter I 1.1) endowed with a natural isomorphism $\sum$ which is described by the functorial isomorphisms
\[\sum_{a,b}:F(a+b) \stackrel{\cong}{\longrightarrow} F(a)+F(b)
\qquad \forall~ a,b \in \pic \]
and which is compatible with the natural isomorphisms $\sigma$ and $\tau$ of
$\pic$ and $\qic$. A \textbf{morphism of additive functors} $u:(F,\sum) \rightarrow (F',\sum') $ is an {\bS}-morphism of cartesian {\bS}-functors (see \cite{G} Chapter I 1.1) which is compatible with the natural isomorphisms $\sum$ and $\sum'$ of $F$ and $F'$ respectively.

An \textbf{equivalence of strictly commutative Picard {\bS}-stacks} between $\pic$ and $\qic$ is an additive functor $(F,\sum):\pic \rightarrow \qic$ with $F$ an equivalence of {\bS}-stacks. Two strictly commutative Picard {\bS}-stacks are \textbf{equivalent as strictly commutative Picard {\bS}-stacks} if there exists an equivalence of strictly commutative Picard {\bS}-stacks between them.

To any strictly commutative Picard {\bS}-stack $\pic$, we associate the sheaffification $\pi_0(\pic)$ of the pre-sheaf which associates to each object $U$ of $\bS$ the group of isomorphism classes of objects of $\pic(U)$, and
 the sheaf $\pi_1(\pic)$ of automorphisms ${\uAut}(e)$ of the neutral object of $\pic$.

In~\cite{SGA4} \S 1.4 Deligne associates to each complex
$K$ of $\cK^{[-1,0]}(\bS)$ a strictly commutative Picard {\bS}-stack $st(K)$ and to each morphism of complexes $g: K \rightarrow L$ an additive functor 
$st(g):st(K) \rightarrow st(L)$. 
Moreover, if ${\bPicard}(\bS)$ denotes the category whose objects are small strictly commutative Picard $\bS$-stacks and whose arrows are isomorphism classes of additive functors, Deligne proves the following equivalence of category
\begin{eqnarray}
\label{st} st: \cD^{[-1,0]}(\bS) &\longrightarrow & {\bPicard}(\bS) \\
 \nonumber  K & \mapsto & st(K) \\
 \nonumber K \stackrel{f}{\rightarrow} L & \mapsto &  st(K) \stackrel{st(f)}{\rightarrow} st(L).
\end{eqnarray}
constructing explicitly the inverse equivalence of $st$, that we denote by $[\,\,]$.

\begin{ex}\label{ex:pic} Let $\pic, \qic$ and $\gic$ be three strictly commutative Picard {\bS}-stacks. \\
I) Let 
$${\HOM}(\pic,\qic)$$
be the strictly commutative Picard {\bS}-stack defined as followed:
 for any object $U$ of $\bS$, the objects of the category ${\HOM}(\pic,\qic)(U)$ are additive functors from ${\pic}_{|U}$ to ${\qic}_{|U}$ and its arrows are morphisms of additive functors.
According~\cite{SGA4} 1.4.18 we have the equality 
$[{\HOM}(\pic,\qic)] = \tau_{\leq 0}{\R}{\Hom}\big([\pic],[\qic]\big)$ in the derived category $\cD^{[-1,0]}(\bS).$\\
II) A \textbf{biadditive functor} $(F,l,r):\pic \times \qic \rightarrow \gic $
 is a morphism of {\bS}-stacks $F:\pic \times \qic \rightarrow \gic $ endowed with two natural isomorphisms, which are described by the functorial isomorphisms
\begin{eqnarray}
 \nonumber l_{a,b,c}:F(a+b,c) &\stackrel{\cong}{\longrightarrow}& F(a,c)+F(b,c) \qquad \forall~ a,b \in \pic, \; \forall~ c \in \qic  \\
 \nonumber r_{a,c,d}:F(a,c+d) &\stackrel{\cong}{\longrightarrow}& F(a,c)+F(a,d)
 \qquad \forall~ a \in \pic, \; \forall~ c,d \in \qic,
\end{eqnarray}
 such that 
\begin{itemize}
	\item for any fixed $a \in \pic$, $F(a,-)$ is compatible with the natural isomorphisms $\sigma$ and $\tau$ of $\pic$ and $\gic$,
       \item for any fixed $c \in \qic$, $F(-,c)$ is compatible with the natural isomorphisms $\sigma$ and $\tau$ of $\qic$ and $\gic$,
        \item for any fixed $a,b\in \pic$ and $c,d \in \qic$ is the following diagram commute 
  \[    \xymatrix{
     F(a+b,c+d)\ar[r]^r  \ar[d]_l& F(a+b,c)+F(a+b,d) \ar[r]^{l+l \qquad}& F(a,c)+F(b,c)+F(a,d)+F(b,d)   \\
   F(a,c+d)+F(b,c+d) \ar[rr]_{r+r}  &  & F(a,c)+F(a,d)+F(b,c)+F(b,d) \ar[u]_{id_\gic +\tau+ id_\gic} .
}\]  
\end{itemize}
A \textbf{morphism of biadditive functors $\alpha:(F,l,r) \Rightarrow (F',l',r') $} is a morphism of morphisms of {\bS}-stacks $\alpha:F \Rightarrow F' $ which is compatible with the natural isomorphisms $l,r$ and $l',r$ of $F$ and $F'$ respectively.
 Let 
 $${\HOM}(\pic,\qic;\gic)$$
be the strictly commutative Picard {\bS}-stack defined as followed:
 for any object $U$ of $\bS$, the objects of the category ${\HOM}(\pic,\qic;\gic)(U)$ are biadditive functors from ${\pic}_{|U} \times {\qic}_{|U}$ to ${\gic}_{|U}$ and its arrows are morphisms of biadditive functors.\\
 III) Let 
 $$\pic \otimes \qic$$
  be the strictly commutative Picard $\bS$-stack endowed with a biadditive functor $\otimes :\pic \times \qic \rightarrow \pic \otimes \qic$ such that for any strictly commutative Picard $\bS$-stack $\gic$, the biadditive functor $\otimes$ defines the following equivalence of strictly commutative Picard $\bS$-stacks: 
\begin{equation}
{\HOM}(\pic \otimes \qic, \gic) \cong  {\HOM}(\pic,\qic;\gic).
\label{eq:tensorlinear}
\end{equation}
According to \cite{SGA4} 1.4.20, in the derived category $\cD^{[-1,0]}(\bS)$ we have the equality 
$[\pic\otimes \qic] = \tau_{\geq -1} ([\pic] \otimes^\LL[\qic]) $.
\end{ex}

By \S 2 \cite{Be10} we have the following operations on strictly commutative Picard $\bS$-stacks:\\
(1) The \textbf{product} of two strictly commutative Picard $\bS$-stacks $\pic$ and $\qic$ is the strictly commutative Picard $\bS$-stack $\pic \times \qic$ defined as followed:
\begin{itemize}
  \item for any object $U$ of $\bS$, an object of the category $\pic \times \qic(U)$ is a pair $(p,q)$ of objects with $p$ an object of $\pic(U)$ and $q$ an object of $\qic(U)$;
  \item for any object $U$ of $\bS$, if $(p,q)$ and $(p',q')$ are two objects of 
  $\pic \times \qic(U)$, an arrow of $\pic \times \qic(U)$ from $(p,q)$ to $(p',q')$ is a pair $(f,g)$ of arrows with $f:p \rightarrow  p' $ an arrow of $\pic(U)$ and $g:q \rightarrow  q' $ an arrow of $\qic(U)$.
\end{itemize}
(2) Let $G:\pic \rightarrow \qic$ and $F:\pic' \rightarrow \qic$ be additive functors between strictly commutative Picard $\bS$-stacks. The \textbf{fibered product} of $\pic$ and $\pic'$ over $\qic$ via $F$ and $G$ is the strictly commutative Picard $\bS$-stack $\pic \times_\qic \pic'$ defined as followed: 
\begin{itemize}
  \item for any object $U$ of {\bS}, the objects of the category $(\pic \times_\qic \pic')(U)$ are triplets $(p,p',f)$ where $p$ is an object of $\pic(U)$, $p' $ is an object of $\pic'(U)$ and $f:G(p) \stackrel{\cong}{\rightarrow}F(p')$ is an isomorphism of $\qic(U)$ between $G(p)$ and $F(p')$;
  \item for any object $U$ of {\bS}, if $(p_1,p'_1,f)$ and $(p_2,p'_2,g)$ are two objects of
  $(\pic \times_\qic \pic')(U)$, an arrow of $(\pic \times_\qic \pic')(U)$ 
 from  $(p_1,p'_1,f)$ to $(p_2,p'_2,g)$ is a pair $(f,g)$ of arrows with
  $\alpha:p_1 \rightarrow p_2$ of arrow of $\pic(U)$ and $\beta:p'_1 \rightarrow p'_2$ an arrow of $\pic'(U)$ such that $ g \circ G(\alpha) = F(\beta) \circ f$.
\end{itemize}
The fibered product $\pic \times_\qic \pic'$ is also called the \textbf{pull-back} $F^*\pic$ of $\pic$ via $F:\pic' \rightarrow \qic$ or the \textbf{pull-back} $G^*\pic'$ of $\pic'$ via $G:\pic \rightarrow \qic$.\\
(3) Let $G:\qic \rightarrow \pic$ and $F:\qic \rightarrow \pic'$ be additive functors between strictly commutative Picard $\bS$-stacks. The \textbf{fibered sum} of $\pic$ and $\pic'$ under $\qic$ via $F$ and $G$ is the strictly commutative Picard $\bS$-stack $\pic +^\qic \pic'$ generated by the following strictly commutative Picard $\bS$-pre-stack $\dic$:
\begin{itemize}
  \item for any object $U$ of {\bS}, the objects of the category $\dic(U)$ are pairs $(p,p')$ with $p$ an object of $\pic(U)$ and  $p'$ an object of $\pic'(U)$;
  \item for any object $U$ of {\bS}, if $(p_1,p'_1)$ and $(p_2,p'_2)$ are two objects of $\dic(U)$, an arrow of $\dic (U)$ from $(p_1,p'_1)$ to $(p_2,p'_2)$ is an
 equivalence class of triplets  
  $(q,\alpha,\beta)$ with $q$ an object of $\qic (U)$, $\alpha: p_1 + G(q) \rightarrow p_2$ an arrow of $\pic(U)$ and
 $\beta: p'_1 + F(q) \rightarrow p'_2$ an arrow of $\pic'(U)$. Two triplets $(q_1,\alpha_1,\beta_1)$ and $(q_2,\alpha_2,\beta_2)$
are equivalent it there is an arrow $\gamma: q_1 \rightarrow q_2$ in $\qic (U)$ such that $\alpha_2 \circ (id +G(\gamma)) = \alpha_1$ and
 $ (F(\gamma) + id) \circ \beta_1 =\beta_2 $.  
\end{itemize}
The fibered sum $\pic +^\qic \pic'$ is also called the \textbf{push-down} $F_*\pic$ of $\pic$ via $F:\qic \rightarrow \pic'$ or the \textbf{push-down} $G_*\pic'$ of $\pic'$ via $G:\qic \rightarrow \pic$.

We have analogous operations on complexes of $\cK^{[-1,0]}(\bS)$:\\
(1) The \textbf{product} of two complexes $P=[d^P:P^{-1} \rightarrow P^0]$ and $ Q=[d^Q: Q^{-1} \rightarrow Q^0]$ of $\cK^{[-1,0]}(\bS)$ is the complex $P+Q=[(d^P,d^Q):P^{-1} + Q^{-1}\rightarrow P^0 +Q^0]$. Via the equivalence of category (\ref{st}) we have that $st(P+ Q) = st(P) \times st(Q)$.\\
(2) Let $P=[d^P:P^{-1} \rightarrow P^0], Q=[d^Q: Q^{-1} \rightarrow Q^0]$ and $G=[d^G: G^{-1} \rightarrow G^0]$ be complexes of $\cK^{[-1,0]}(\bS)$ and let $f:P \rightarrow G$ and $g:Q \rightarrow G$ be morphisms of complexes. The \textbf{fibered product} $P \times_G Q$ of $P$ and $Q$ over $G$
 is the complex $[d_P \times_{d_G} d_Q: P^{-1} \times_{G^{-1}} Q^{-1} \rightarrow P^{0} \times_{G^0} Q^{0}]$,
where for $i=-1,0$ the abelian sheaf $P^{i} \times_{G^i} Q^{i}$ is the fibered product of $P^i$ and of $Q^i$ over $G^i$ and the morphism of abelian sheaves $d_P \times_{d_G}d_Q $ is given by the universal property of the fibered product 
$P^{0} \times_{G^0} Q^{0}$. The fibered product $P \times_G Q$ is also called the \textbf{pull-back} $g^*P$ of $P$ via $g:Q \rightarrow G$ or the \textbf{pull-back} $f^*Q$ of $Q$ via $f:P \rightarrow G$.
Remark that $st(P \times_G Q) = st(P) \times_{st(G)} st(Q)$ via the equivalence of category (\ref{st}).\\
(3) Let $P=[d^P:P^{-1} \rightarrow P^0], Q=[d^Q: Q^{-1} \rightarrow Q^0]$ and $G=[d^G: G^{-1} \rightarrow G^0]$ be complexes of $\cK^{[-1,0]}(\bS)$ and let $f:G \rightarrow P$ and $g:G \rightarrow Q$ be morphisms of complexes. The \textbf{fibered sum} $P +^G Q$ of $P$ and $Q$ under $G$
 is the complex $[d_P +^{d_G} d_Q: P^{-1} +^{G^{-1}} Q^{-1} \rightarrow P^{0} +^{G^0} Q^{0}]$,
where for $i=-1,0$ the abelian sheaf $P^{i} +^{G^i} Q^{i}$ is the fibered sum of $P^i$ and of $Q^i$ under $G^i$ and the morphism of abelian sheaves $d_P +^{d_G}d_Q $ is given by the universal property of the fibered sum 
$P^{-1} +^{G^{-1}} Q^{-1}$. The fibered sum $P +^G Q$ is also called the \textbf{push-down} $g_*P$ of $P$ via $g:G \rightarrow Q$ or the \textbf{push-down} $f_*Q$ of $Q$ via $f:G \rightarrow P$.
We have $st(P +^G Q) = st(P) +^{st(G)} st(Q)$ via the equivalence of category (\ref{st}).\\

If $\pic$ and $\gic$ are strictly commutative Picard $\bS$-stacks, by \S 3 \cite{Be10} an \textbf{extension}  $\eic=(\eic,I,J) $ of $\pic$ by $\gic$ consists of a strictly commutative Picard $\bS$-stack $\eic$, two additive functors $I:\gic \rightarrow \eic$ and $ J:\eic \rightarrow \pic$, and an isomorphism of additive functors $J \circ I \cong 0,$ such that the following equivalent conditions are satisfied:
   
   	(a) $\pi_0(J): \pi_0(\eic) \rightarrow \pi_0(\pic)$ is surjective and $I$ induces an equivalence of strictly commutative Picard $\bS$-stacks between $\gic$ and $\ker(J);$
     
      (b) $\pi_1(I): \pi_1(\gic) \rightarrow \pi_1(\eic)$ is injective and $J$ induces an equivalence of strictly commutative Picard $\bS$-stacks between $\coker(I)$ and $\pic$. 
      
In terms of complexes of $\cK^{[-1,0]}(\bS)$, an \textbf{extension} $E=(E,i,j)$ of $P$ by $G$
consists of a complex $E$ of $\cK^{[-1,0]}(\bS)$, two morphisms of complexes  $i:G \rightarrow E$ and $ j:E \rightarrow P$ of $\cK^{[-1,0]}(\bS)$, and an homotopy between $j \circ i$ and $0$,
such that the following equivalent conditions are satisfied:

	(a) ${\h}^{0}(j): {\h}^{0}(E) \rightarrow {\h}^{0}(P)$ is surjective and $i$ induces a quasi-iso\-mor\-phism between $G$ and $ \tau_{\leq 0} (MC(j)[-1])$;
 
      (b) ${\h}^{-1}(i): {\h}^{-1}(G) \rightarrow {\h}^{-1}(E)$ is injective and $j$ induces a quasi-iso\-mor\-phism between $ \tau_{\geq -1} MC(i)$ and $P$.\\

As recalled in the introduction we can see 1-motives as complexes of abelian sheaves on $\bS$ concentrated in two consecutive degrees. Hence via (\ref{st}) to each 1-motives is associated a strictly commutative Picard $\bS$-stack and in particular, we can apply all what we have recalled in this section to 1-motives. Moreover, since 
 a short exact sequence in $\cK^{[-1,0]}(\bS)$ is an extension of complexes in the above sense 
 (see \cite{Be10} Remark 3.6.), extensions of 1-motives are also extensions of complexes in the above sense, i.e. they furnishes extensions of strictly commutative Picard $\bS$-stacks (see Remark \ref{rem:ext-extcomplex}).  


\section{Proof of theorem 0.1 (\textbf{b})}

\emph{Proof of Theorem \ref{mainthm}} \textbf{b}.
Via the equivalence of category (\ref{st}),
to the trivial biextension of $(K_1,K_2)$ by $K_3$ corresponds the trivial biextension ${\bic}_0= st(K_3) \times st(K_1) \times st(K_2)$ of $(st(K_1),st(K_2))$ by $st(K_3)$ (see \cite{Be11} Definition 5.1). In particular ${\bic}_0$ is a Picard stack and so the group of isomorphism classes of arrows from ${\bic}_0$ to itself is the cohomology group ${\h}^0([{\HOM}({\bic}_0,{\bic}_0)])$, where ${\HOM}({\bic}_0,{\bic}_0)$ is the strictly commutative Picard stack of additive functors from ${\bic}_0$ to itself. Therefore, in order to compute ${\Biext}^0(K_1,K_2;K_3)$ it is enough to compute the complex $[{\HOM}({\bic}_0,{\bic}_0)]$.\\
Let $F: {\bic}_0\rightarrow {\bic}_0$ be an additive functor.
Since $F$ is first of all an arrow from the $st(K_3)$-torsor over $st(K_1) \times st(K_2)$ underlying ${\bic}_0$ to itself,  $F$ is given by the formula
$$F(b)= b + IF'J (b) \qquad \forall \; b \in {\bic}_0$$
where $F': st(K_1) \times st(K_2) \rightarrow st(K_3)$ is an additive functor and $J: {\bic}_0 \rightarrow st(K_1) \times st(K_2)$ and $I: st(K_3) \rightarrow {\bic}_0$ are the additive functors underlying the structure of $st(K_3)$-torsor over $st(K_1) \times st(K_2)$ of ${\bic}_0$.
Now $F: {\bic}_0 \rightarrow {\bic}_0$ must be compatible with the structures of extension of $st(K_2)_{st(K_1)}$
by $st(K_3)_{st(K_1)}$ and of extension of $st(K_1)_{st(K_2)}$ by $st(K_3)_{st(K_2)}$ underlying ${\bic}_0$,
and so $F': st(K_1) \times st(K_2) \rightarrow st(K_3)$ must be a biadditive functor, i.e. an object of ${\HOM}(st(K_1), st(K_2);st(K_3))$.
Hence ${\HOM}({\bic}_0,{\bic}_0)$ is equivalent as Picard stack to ${\HOM}(st(K_1), st(K_2);st(K_3))$ via the following additive functor 
\begin{eqnarray}
\nonumber {\HOM}(st(K_1), st(K_2);st(K_3)) &\longrightarrow & {\HOM}({\bic}_0,{\bic}_0) \\
 \nonumber  F' & \mapsto & \big(b \mapsto b + IF'J (b) \big).
\end{eqnarray}
By (\ref{eq:tensorlinear}), ${\HOM}(st(K_1), st(K_2);st(K_3)) \cong {\HOM}(st(K_1) \otimes st(K_2),st(K_3))$ and so 
\begin{equation}
 [{\HOM}({\bic}_0,{\bic}_0)]= \tau_{\leq 0}{\R}{\Hom}\Big(\tau_{\geq -1}(K_1\otimes^{\LL}K_2),K_3 \Big),
\label{eq:proof(b)}
\end{equation}
 and in particular the group of isomorphism classes of additive functors from ${\bic}_0$ to itself is isomorphic to the group 
 $${\Hom}_{\cD(\bS)}(K_1\otimes^{\LL}K_2,K_3).$$
This implies that ${\Biext}^0(K_1,K_2;K_3) \cong {\Hom}_{\cD(\bS)}(K_1\otimes^{\LL}K_2,K_3).$ \\

In Section 6 we gives another proof of Theorem \ref{mainthm} \textbf{b}.
Remark that by (\ref{eq:proof(b)})
${\h}^{-1}([{\HOM}({\bic}_0,{\bic}_0)]) \cong {\Hom}_{\cD(\bS)}(K_1\otimes^{\LL}K_2,K_3[-1]).$
Since $K_i=[A_i \rightarrow B_i]$ are 1-motives, ${\Hom}(B_j,A_i)=0$ for $i,j=1,2,3$ (see \cite{Be09} Lemma 1.1.1), and hence the group ${\h}^{-1}([{\HOM}({\bic}_0,{\bic}_0)])$ is trivial.

\section{The category $\Psi_{{\rL}^{..}}(G)$ and its homological interpretation}

Consider the following complex of 1-motives defined over $S$
\begin{equation}
\qquad \qquad R  \stackrel{D^R}{\longrightarrow}  Q \stackrel{D^Q}{\longrightarrow} P \longrightarrow 0
\label{eq:complex-1mot}
\end{equation} 
Explicitly, $R=[d^R:R^{-1} \rightarrow R^0], Q=[d^Q:Q^{-1} \rightarrow Q^0],P=[d^P:P^{-1} \rightarrow P^0]$ and  $D^R=(d^{R,-1},d^{R,0}), D^Q=(d^{Q,-1},d^{Q,0})$. This complex is a bicomplex ${\rL}^{..}$ of abelian sheaves on $\bS$,   
\[\xymatrix{
 R^{-1} \ar[d]_{d^R} \ar[r]^{d^{R, -1}} & Q^{-1} \ar[d]^{d^Q} \ar[r]^{d^{Q, -1}} & P^{-1} \ar[d]^{d^P} \ar[r] &0\\
R^0 \ar[r]^{d^{R, 0}} &  Q^0  \ar[r]^{d^{Q, 0}} & P^0  \ar[r] &0
}
\]  
where $P^0,P^{-1},Q^0,Q^{-1},R^0,R^{-1}$ are respectively in degrees $(0,0),(0,-1),(-1,0),\\(-1,-1),(-2,0),(-2,-1)$.
 Denote by ${\Tot}({\rL}^{..})$ its total complex. Let $G=[d^G:G^{-1} \rightarrow G^0]$ be a 1-motive defined over $S$.

\begin{defn}\label{psi}
Denote by $\Psi_{{\rL}^{..}}(G)$ the category 
\begin{enumerate}
       \item whose objects are pairs $(E,I)$ with $E$ an extension of 1-motives of $P$ by $G$ and $I$ a trivialization of the extension $(D^Q)^* E$ of $Q$ by $G$ obtained as pull-back of $E$ by $D^Q$.
        Moreover we require that the corresponding trivialization
           $(D^R)^* I$ of $(D^R)^* (D^Q)^* E$ is the trivialization arising from the isomorphism of transitivity $ (D^R)^* (D^Q)^* E \cong (D^Q \circ D^R)^*E$ and the relation $ D^Q \circ D^R = 0$. Note that to have such a trivialization $I$ is the same thing as to have a lifting $I: Q \rightarrow E$ of $D^Q:Q  \rightarrow P$ such that $ I \circ D^R = 0;$
      \item whose arrows $F: (E,I) \rightarrow (E',I')$ are morphisms of extensions $F: E \rightarrow E'$ of 1-motives compatible with the trivializations $I,I'$, i.e. we have an isomorphism of additive functors $F \circ I \cong I'$.
\end{enumerate}
 \end{defn}

In order to compute the homological interpretation of the category $\Psi_{{\rL}^{..}}(G)$, the language of Picard stacks will be very useful. Hence now we translate the construction of the category $\Psi_{{\rL}^{..}}(G)$ in terms of Picard stacks : Let 
$\ric=st(R), \qic=st(Q), \pic=st(P), \gic=st(G), D^\ric =st(D^R)$ and $D^\qic =st(D^Q)$. The complex of 1-motives (\ref{eq:complex-1mot}) furnishes the following complex of strictly commutative Picard $\bS$-stacks
\[\lic^.: \qquad \ric \stackrel{D^\ric}{\longrightarrow} \qic \stackrel{D^\qic}{\longrightarrow} \pic \stackrel{D^\pic}{\longrightarrow} 0\] 
with $\pic, \qic$ and $\ric$ in degrees 0,-1 and -2 respectively.
Via the equivalence of categories (\ref{st}), to the category $\Psi_{{\rL}^{..}}(G)$ is associated the category
$\Psi_{{\rL}^{..}}(G)$ $\Psi_{\lic^.}(\gic)$ 
\begin{enumerate}
       \item whose objects are pairs $(\eic,I)$ with $\eic$ an extension of $\pic$ by $\gic$ and $I$ a trivialization of the extension $(D^\qic)^* \eic$ of $\qic$ by $\gic$ obtained as pull-back of $\eic$ by $D^\qic$.
        Moreover we require that the corresponding trivialization
           $(D^\ric)^* I$ of $(D^\ric)^* (D^\qic)^* \eic$ is the trivialization arising from the isomorphism of transitivity $ (D^\ric)^* (D^\qic)^* \eic \cong  (D^\qic \circ D^\ric)^*\eic$ and the relation $ D^\qic \circ D^\ric \cong 0$. Note that to have such a trivialization $I$ is the same thing as to have a lifting $I: \qic \rightarrow \eic$ of $D^\qic:\qic  \rightarrow \pic$ such that $ I \circ D^\ric \cong 0;$
      \item whose arrows $F: (\eic,I) \rightarrow (\eic',I')$ are morphisms of extensions $F: \eic \rightarrow \eic'$ compatible with the trivializations $I,I'$, i.e. we have an isomorphism of additive functors $F \circ I \cong I'$.
\end{enumerate}

As observed at the end of section 2, extensions of 1-motives furnishes extensions of strictly commutative Picard stacks
and so the sum of extensions of strictly commutative Picard stacks introduced in \cite{Be10} 4.6 defines a group law on the set of isomorphism classes of objects of $\Psi_{{\rL}^{..}}(G)$. We denote this group by $\Psi_{{\rL}^{..}}^1(G).$
The neutral object of $\Psi_{{\rL}^{..}}(G)$ is the object $(E_0,I_0)$ where $E_0$ is the trivial extension $G \times P$ of $P$ by $G$ and $I_0$ is the trivialization $(Id_{Q},0)$
of the extension $(D^Q)^*E_0= G \times Q$ of $Q$ by $G$. We can consider $I_0$ as the lifting $(D^Q,0)$ of $D^Q: Q \rightarrow P$.

The monoid of automorphisms of an object $(E, I)$ of $\Psi_{{\rL}^{..}}(G)$ is canonically isomorphic to the monoid of automorphisms of $(E_0,I_0)$: to an automorphism $F :(E_0,I_0) \rightarrow (E_0,I_0)$ the canonical isomorphism associates the automorphism $F + Id_{(E,I)}$ of $(E_0,I_0)+ (E,I) \cong (E,I)$. 
The monoid of automorphisms of $(E_0,I_0)$ is a commutative group via the composition law $(F,G) \mapsto F+G$
(here $F+G$ is the automorphism of $(E_0,I_0) +(E_0,I_0) \cong (E_0,I_0)$). Hence we can conclude that the set of automorphisms of an object of $\Psi_{{\rL}^{..}}(G)$ is a commutative group that we denote by 
$\Psi_{{\rL}^{..}}^0(G).$

We can now state the homological interpretation of the groups $\Psi_{{\rL}^{..}}^i(G)$.

\begin{thm}\label{thm:psi-ext}
$$\Psi_{{\rL}^{..}}^i(G) \cong {\Ext}^i\big({\Tot}({\rL}^{..}),G\big)= {\Hom}_{\cD(\bS)}\big({\Tot}({\rL}^{..}),G[i]\big) \qquad \qquad i=0,1.$$
\end{thm}

\emph{Proof of the case i=0.} For this proof we will work with the category $\Psi_{\lic^.}(\gic)$. As observed above, $\Psi_{\lic^.}^0(\gic)$ is canonically isomorphic to the group of isomorphism classes of arrows from the neutral object  $(\eic_0,I_0)$ of $\Psi_{\lic^.}(\gic)$ to itself. By definition of arrows in the category $\Psi_{\lic^.}(\gic)$, the additive functor $F: \eic_0 \rightarrow \eic_0$ is an arrow from $(\eic_0,I_0)$ to itself if we have an isomorphism of additive functors  $F \circ D^\qic \cong 0$, i.e. if $F$ is an object of the strictly commutative Picard $\bS$-stack
\[\kic=\ker\big({\HOM}(\pic,\gic)  \stackrel{D^\qic}{\rightarrow} {\HOM}(\qic,\gic)\big).\]
Therefore we have the equality 
\begin{equation}
\Psi_{\lic^.}^0(\gic) = {\h}^0\big([\kic]\big)
\label{eq:kic}
\end{equation}
and in order to conclude, it is enough to compute the complex $[\kic]$ of $\cK^{[-1,0]}(\bS).$
By \cite{Be10} Lemma 3.4 we have  
\[ [\kic]= \tau_{\leq 0} \Big( MC\big(\tau_{\leq 0}{\R}{\Hom}([\pic],[\gic]) \stackrel{(d^{R, -1},d^{R, 0})}{\longrightarrow} 
\tau_{\leq 0}{\R}{\Hom}([\qic],[\gic]) \big)[-1] \Big).\]
Explicitly, we get 
\begin{equation}
[\kic]= \big[{\Hom}(P^0,G^{-1}) \stackrel{((d^G,d^P),d^{Q, 0})}{\longrightarrow} 
 K_1 +K_2\big]
\label{eq:psi0-1}
\end{equation}
 where 
\[
\begin{aligned}
\nonumber	K_1&= \ker\big({\Hom}(P^0,G^0) + {\Hom}(P^{-1},G^{-1}) \stackrel{(d^{Q, 0},d^{Q, -1})}{\rightarrow} {\Hom}(Q^0,G^0)+{\Hom}(Q^{-1},G^{-1}) \big)\\ 
\nonumber	K_2&=\ker \big({\Hom}(Q^0,G^{-1}) \stackrel{(d^G,d^Q)}{\rightarrow} {\Hom}(Q^0,G^0)+{\Hom}(Q^{-1},G^{-1}) \big).
\end{aligned}
\]
In order to simplify notation let $C^.: C^{-3} \rightarrow C^{-2} \rightarrow C^{-1} \rightarrow C^0$ be the total complex ${\Tot}([\lic^.])$. In particular $C^0=P^0, C^{-1}= P^{-1}+Q^0$ and $C^{-2}=Q^{-1}+R^0.$ 
The stupid filtration of the complexes $C^.$ and $G$ furnishes the spectral sequence
\begin{equation}
{\E}^{pq}_1= \bigoplus_{p_2-p_1=p} {\Ext}^q (C^{p_1},G^{p_2}) \Longrightarrow  {\Ext}^* (C^.,G).
\label{eq:suitesp}
\end{equation} 
This spectral sequence is concentrated in the region of the plane defined by $ -1 \leq p \leq 3$ and $q \geq 0$. We are interested on the total degrees -1 and 0. The rows $q=1$ and $q=0$ are
\small
\[
\begin{aligned}
& {\Ext}^1(C^{0},G^{-1}) \rightarrow  {\Ext}^1(C^{0},G^{0})  \oplus {\Ext}^1(C^{-1},G^{-1}) \rightarrow  {\Ext}^1(C^{-1},G^{0})  \oplus {\Ext}^1(C^{-2},G^{-1}) \rightarrow ...\\
&{\Hom}(C^{0},G^{-1})\stackrel{d_1^{-10}}{\rightarrow} {\Hom}(C^{0},G^{0})  \oplus {\Hom}(C^{-1},G^{-1})  \stackrel{d_1^{00}}{\rightarrow}  {\Hom}(C^{-1},G^{0})  \oplus {\Hom}(C^{-2},G^{-1}) \rightarrow ...
\end{aligned}
\]
\normalsize
Since ${\Ext}^1(C^{0},G^{-1})=0,$ i.e. the only extension of $[G^{-1} \rightarrow 0]$ by $[0 \rightarrow C^0]$ is the trivial one, we obtain  
\begin{eqnarray}
 \label{eq:2-cat}  {\Hom}_{{\cD}({\bS})}(C^.,G[-1]) &= & {\Ext}^{-1}(C^.,G) = {\E}^{-10}_2 = \ker ( d_1^{-10} ),\\
 \label{eq:D(S)=H(S)}  {\Hom}_{{\cD}({\bS})}(C^.,G) &= & {\Ext}^0(C^.,G) = {\E}^{00}_2 = \ker ( d_1^{00} ) / \im(d_1^{-10}) .
 \end{eqnarray}
 Comparing the above equalities with the explicit computation (\ref{eq:psi0-1}) of the complex $[\kic]$, we get 
 \begin{equation}
{\Ext}^i(C^.,G) = {\h}^i\big([\kic]\big) \qquad \qquad i=-1,0. 
\label{eq:ext-h(k)}
\end{equation}
 These equalities together with equality (\ref{eq:kic}) give the expected statement.

 \begin{rem}
 In the computation (\ref{eq:psi0-1}) the term ${\Hom}(P^{-1},G^0)$ does not appear because we work with the good truncation $\tau_{\leq 0}{\R}{\Hom}([\pic],[\gic])$. In the spectral sequence (\ref{eq:suitesp}) this term appear but we are interested in elements which become zero in ${\Hom}(P^{-1},G^0)$. 
 \end{rem}

\begin{rem} \label{rem:D(S)=H(S)} If ${\cH}(\bS)$ denotes the category of complexes of abelian sheaves on $\bS$ modulo 
homotopy, by equality (\ref{eq:D(S)=H(S)}) we have $ {\Hom}_{{\cD}({\bS})}(C^.,G)={\Hom}_{{\cH}({\bS})}(C^.,G).$
Moreover, since $P$ and $G$ are 1-motives we have that ${\Hom}(C^0,G^{-1})=0$ (\cite{Be09} Lemma 1.1.1) and so 
$ {\Hom}_{{\cD}({\bS})}(C^.,G)={\Hom}_{{\cK}({\bS})}(C^.,G).$
\end{rem}

\begin{rem}
 The category $\Psi_{\lic^.}(\gic)$ should be a 2-category, but it is just a category because we are working with strictly commutative Picard stacks defined by 1-motives. In fact, if $A$ is a group scheme which is locally for the \'etale
topology a constant group scheme defined by a finitely generated free
$\ZZ \,$-module and $B$ is an extension of an abelian scheme by a torus, then the group ${\Hom}(B,A)$ is trivial (\cite{Be09} Lemma 1.1.1). Because of (\ref{eq:kic}), (\ref{eq:2-cat}), (\ref{eq:ext-h(k)}), this implies that the group $\Psi_{\lic^.}^{-1}(\gic)$ of automorphisms of arrows from an object of $\Psi_{\lic^.}(\gic)$ to itself is trivial:
$$ \Psi_{\lic^.}^{-1}(\gic) \cong {\h}^{-1}\big([\kic]\big) \cong {\Ext}^{-1}(C^.,G)=\ker ( d_1^{-10} )=0.$$
 \end{rem}

\emph{Proof of the case i=1.} First we show how an object $(E,I)$ of $\Psi_{{\rL}^{..}}(G)$ defines a morphism ${\Tot}({\rL}^{..}) \rightarrow G[1]$ in the derived category $\cD(\bS)$. Recall that $E$ is an extension of 1-motives of $P$ by $G$. Denote $j:E \rightarrow P$ the surjective morphism underlying the extension $E$. Since the trivialization $I$ can be seen as a lifting $Q \rightarrow E$ of $D^Q:Q  \rightarrow P$ such that $ I \circ D^R = 0$, we have the following diagram in the category $\cK(\bS)$ of complexes of abelian sheaves on $\bS$ 
 \begin{equation}\label{eq:su}
\xymatrix{
{\rL}^{..}: & R  \ar[d] \ar[r]^{D^R} & Q \ar[d]_{i} \ar[r]^{D^Q} & P \ar[d]^{id_P} \ar[r]  & 0 \\
MC(j): &0 \ar[r] & E  \ar[r]^{j} & P  \ar[r] & 0
}
\end{equation}
where $ i \circ D^R =0$ and $ j \circ i= id_P \circ D^Q$. Putting the complex $P$ in degree 0, 
the above diagram gives an arrow  
\[ c(E,I): {\Tot}({\rL}^{..}) \longrightarrow MC(j)\]
in the derived category $\cD(\bS)$. The complex $E$ is an extension of 1-motives of $P$ by $G$ and so as observed at the end of section 2, $G$ is quasi-isomorphic to $\tau_{\leq 0}(MC(j)[-1])$. Hence we have constructed a canonical arrow
  \begin{eqnarray}\label{c}
 c: \Psi_{{\rL}^{..}}^1(G) & \longrightarrow & {\Hom}_{\cD(\bS)}\big({\Tot}({\rL}^{..}),G[1]\big)  \\
 \nonumber  (E,I) & \mapsto & c(E,I).
\end{eqnarray}
Now we will show that this arrow is bijective. The proof that this bijection is additive, i.e. that $c$ is an isomorphism of groups, is left to the reader.  \\

Injectivity:  Let $(E,I)$ be an object of $\Psi_{{\rL}^{..}}(G)$ such that the morphism $c(E,I)$ that it defines in $\cD(\bS)$ is the zero morphism.
The hypothesis that $c(E,I)$ is zero in $\cD(\bS)$ implies that there exists a resolution of $G$ 
$$V^0 \longrightarrow V^1 \longrightarrow V^2 \longrightarrow ...$$
 and a quasi isomorphism
\begin{equation}
\xymatrix{
  0  \ar[r] & E\ar[d]_{v^0} \ar[r]^{j} & P \ar[d]^{v^1} \ar[r]  & 0 &\\
 0  \ar[r] & V^0 \ar[r]^{k} & V^1 \ar[r] & V^2 \ar[r] & ...
}
\label{eq:qiso}
\end{equation}
such that the composite 
\[
  \xymatrix{
  R \ar[r]^{D^R} & Q\ar[d]_{i} \ar[r]^{D^Q} & P \ar[d]^{id_P} \ar[r]  & 0 &\\
  0  \ar[r] & E\ar[d]_{v^0} \ar[r]^{j} & P \ar[d]^{v^1} \ar[r]  & 0 &\\
 0  \ar[r] & V^0 \ar[r]^{k} & V^1 \ar[r] & V^2 \ar[r] & ...
}
\]
is homotopic to zero. We can assume $V^i \in \cK^{[-1,0]}(\bS)$ for all $i$ and $V^i=0$ for $i \geq 2$ (instead of the complex of complexes $(V^i)_i$ consider its good truncation in degree 1). The complex of complexes $(V^i)_i$ is a resolution of $G$, and so
the short sequence 
$$ 0 \longrightarrow G \longrightarrow V^0 \longrightarrow V^1 \longrightarrow 0$$
is exact, i.e. $V^0$ is an extension of $W$ by $G$.
Since the quasi-isomorphism (\ref{eq:qiso}) induces the identity on $G$, the extension $E$ is in fact the fibred product $P \times_{V^1} V^0$ of $P$ and $V^0$ over $V^1$. Therefore, the morphism $s: P \rightarrow V^0$ inducing the homotopy $(v^0, v^1) \circ c(\eic,I) \sim 0$, i.e. satisfying $ k\circ s= v^1 \circ id_{P},$ factorizes through a morphism
\[h: P \longrightarrow E = P \times_{V^1} V^0\]
satisfying 
\[ j \circ h =id_{P} \qquad \qquad h \circ D^Q = i .\]
These two equalities mean that $h$ splits the extension $E$, which is therefore the trivial extension of $P$ by $G$, and that $h$ is compatible with the trivializations $I$.
Hence we can conclude that the object $(E,I)$ lies in the isomorphism class of the zero object of $\Psi_{{\rL}^{..}}(G)$.\\

Surjectivity: Now we show that for any morphism $f$ of ${\Hom}_{\cD(\bS)}({\Tot}({\rL}^{..}),G[1])$, there is an element of $\Psi_{{\rL}^{..}}^1(G)$ whose image via $c$ is $f$.
The hypothesis that $f$ is an element of $\cD(\bS)$ implies that there exists a resolution of $G$ 
$$V^0 \longrightarrow V^1 \longrightarrow V^2 \longrightarrow ...$$
such that the morphism $f$ can be described in the category ${\cH}(\bS)$ of complexes modulo homotopy via the following diagram
	\begin{equation}\label{eq:su1}
  \xymatrix{
  R \ar[r]^{D^R} & Q\ar[d]_{v^0} \ar[r]^{D^Q} & P \ar[d]^{v^1} \ar[r]  & 0 &\\
 0  \ar[r] & V^0 \ar[r]^{k} & V^1 \ar[r] & V^2 \ar[r] &...
}
\end{equation}
We can assume $V^i \in \cK^{[-1,0]}(\bS)$ for all $i$ and $V^i=0$ for $i \geq 2$ (instead of the complex of complexes $(V^i)_i$ consider its good truncation in degree 1). Since the complex of complexes $(V^i)_i$ is a resolution of $G$,
the short sequence of complexes 
$$ 0 \longrightarrow G \longrightarrow V^0 \longrightarrow V^1 \longrightarrow 0$$
is exact, i.e. $V^0$ is an extension of $V^1$ by $G$.
 Consider the extension of $P$ by $G$
 \[Z=(v^1)^*V^0 = V^0 \times_{V^1} P\]
obtained as pull-back of $V^0$ via $v^{1}:P \rightarrow V^1.$ The pull-back of a short exact sequence is again a short exact sequence, and so $0 \rightarrow G \rightarrow Z \rightarrow P \rightarrow 0 $ is exact.
Moreover, as observed in Remark \ref{rem:ext-extcomplex}, since $P$ and $G$ are 1-motives the complex $Z$ is an extension of 1-motives. The condition $v^1 \circ D^Q = k \circ v^0 $ implies that $v^0 : Q\rightarrow V^0$ factories through a morphism
\[z:  Q \rightarrow Z\]
satisfying $l \circ z = D^Q$, with $l: Z \rightarrow P$ the canonical surjection of the extension $Z$. Moreover the equalities  $ v^0 \circ D^R =D^Q \circ D^R =0$ furnish $z \circ D^R=0$. 
Therefore the datum $(Z,z)$ is an object of the category  $\Psi_{{\rL}^{..}}(G)$. Consider now the morphism $c(Z,z): {\Tot}({\rL}^{..}) \rightarrow G[1]$ associated to
$(Z,z)$. By construction, the morphism $f$ (\ref{eq:su1})
is the composite of the morphism $c(Z,z)$ 
\[
\xymatrix{
 R  \ar[d] \ar[r]^{D^R} & Q \ar[d]_{z} \ar[r]^{D^Q} & P \ar[d]^{id_P} \ar[r]  & 0 \\
0 \ar[r] & Z  \ar[r]^{l} & P  \ar[r] & 0
}
\]
with the morphism 
\[\xymatrix{
  0  \ar[r] & Z\ar[d]_{h} \ar[r]^{l} & P \ar[d]^{v^1} \ar[r]  & 0 \\
 0  \ar[r] & V^0 \ar[r]^{k} & V^1 \ar[r] & 0  ,
} \]
where $h: Z=(v^1)^*V^0 \rightarrow V^0$ is the canonical projection underlying the pull-back $Z$. Since this last morphism is a morphism of resolutions of $G$ (inducing the identity on $G$), we can conclude that in the derived category $\cD(\bS)$ the morphism $f : {\Tot}({\rL}^{..}) \rightarrow G[1]$ (\ref{eq:su1}) is the morphism $c(Z,z)$.\\

Using the above homological description of the groups $\Psi_{{\rL}^{..}}^i(G)$ for $i=0,1$ we can study how the category $\Psi_{{\rL}^{..}}(G) $ varies 
with respect to the bicomplex ${\rL}^{..}$. Let $ R' \rightarrow Q' \rightarrow P' \rightarrow 0$ be another complex of 1-motives defined over $S$. Denote by ${{\rL}'}^{..}$ its total bicomplex. Consider a morphism of bicomplexes
\[ F: {{\rL}'}^{..} \longrightarrow {\rL}^{..} \]
given by the following commutative diagram 
\begin{equation}\label{variance:1}
\xymatrix{
 R' \ar[d]_{F^{-2}} \ar[r]^{D^{R'}} & Q'  \ar[d]_{F^{-1}} \ar[r]^{D^{Q'}} & P'  \ar[d]^{F^0} \ar[r]  & 0 \\
 R  \ar[r]_{D^R} & Q  \ar[r]_{D^Q} & P  \ar[r] & 0.
}
\end{equation}
The morphism $F$ defines a canonical functor
\[
F^*: \Psi_{{\rL}^{..}}(G) \longrightarrow \Psi_{{{\rL}'}^{..}}(G)
\]
as follows: if $(E,I)$ is an object of $\Psi_{{\rL}^{..}}(G)$, $F^*(E,I)$ is the object $(E',I')$ where
\begin{itemize}
  \item $E'$ is the extension $(F^0)^*E$ of $P'$ by $G$
obtained as pull-back of $E$ via $F^0: P' \rightarrow P$;
  \item $I'$ is the trivialization $(F^{-1})^*I $ of $(D^{Q'})^*E'$ induced by the trivialization $I$ of $(D^{Q})^*E$
via the commutativity of the first square of~(\ref{variance:1}).
\end{itemize}
The commutativity of the diagram~(\ref{variance:1}) implies that $(E',I')$ is in fact an object of $\Psi_{{{\rL}'}^{..}}(G)$ ( the condition $I' \circ D^{Q'} = 0$ is easily deducible from the corresponding conditions on $I$ and from the commutativity of the diagram~(\ref{variance:1})).

\begin{prop}\label{prop:psi-equiv}
Let $F: {{\rL}'}^{..} \rightarrow {\rL}^{..}$ be morphism of bicomplexes.
The corresponding functor $F^*: \Psi_{{\rL}^{..}}(G) \rightarrow \Psi_{{{\rL}'}^{..}}(G)$ is an equivalence of categories if and only if the homomorphisms 
$${\h}^i\big({\Tot}(F)\big):{\h}^i\big({\Tot}({{\rL}'}^{..})\big) \longrightarrow {\h}^i\big({\Tot}({\rL}^{..})\big)  \qquad i=0,1$$
 are isomorphisms.
\end{prop}

\begin{proof} The functor $F^*: \Psi_{{\rL}^{..}}(G) \rightarrow \Psi_{{{\rL}'}^{..}}(G)$ induces the homomorphisms 
\begin{equation}\label{1}
     \Psi_{{\rL}^{..}}^i(G) \longrightarrow \Psi_{{{\rL}'}^{..}}^i(G) \qquad i=0,1.
\end{equation}
On the other hand the morphism of bicomplexes  $F: {{\rL}'}^{..} \rightarrow {\rL}^{..}$ defines the homomorphisms
\begin{equation}\label{2}
    {\Ext}^i\big({\Tot}({\rL}^{..}), -\big) \longrightarrow {\Ext}^i\big({\Tot}({{\rL}'}^{..}), -\big) \qquad i \in {\ZZ}.
\end{equation}
Since the homomorphisms~(\ref{1}) and~(\ref{2}) are compatible with the canonical isomorphisms obtained in Theorem~\ref{thm:psi-ext},
 the following diagrams (with $i=0,1$) are commutative:
\[
\begin{array}{ccc}
 \Psi_{{\rL}^{..}}^i(G)&\rightarrow &{\Ext}^i\big({\Tot}({\rL}^{..}),G\big)\\
 \downarrow &  & \downarrow \\
 \Psi_{{{\rL}'}^{..}}^i(G)& \rightarrow & {\Ext}^i\big({\Tot}({{\rL}'}^{..}),G\big).
\end{array}
\]
The functor $F^*: \Psi_{{\rL}^{..}}(G) \rightarrow \Psi_{{{\rL}'}^{..}}(G)$ is an equivalence of categories if and only if
 the homomorphisms~(\ref{1}) are isomorphisms, and so using the above commutative diagrams 
we are reduced to prove that the homomorphisms~(\ref{2}) are isomorphisms if and only if the homomorphisms
 ${\h}^i\big({\Tot}(F)\big):{\h}^i\big({\Tot}({{\rL}'}^{..})\big) \rightarrow {\h}^i\big({\Tot}({\rL}^{..})\big)$ 
are isomorphisms. This last assertion is clearly true.
\end{proof}

\section{Geometrical description of $\Psi_{{\rL}..}(G)$}

In this section we switch from cohomological notation to homological.

Let $K=[u:A \rightarrow B]$ be a 1-motive defined over $S$ with $A$ in degree 1 and $B$ in degree 0. We start constructing \textbf{a canonical flat partial resolution} ${\rL}..(K)$ of the complex $K$. But before, we introduce the following notations: if $P$ is an abelian sheaf on $\bS$, we denote by $[p]$ the point of ${\ZZ}[P](U)$ defined by the point $p$ of $P(U)$ with $U$ an $S$-scheme. In an analogous way, if $p,q$ and $r$ are points of $P(U)$ we denote by $[p,q]$, $[p,q,r]$ the elements of ${\ZZ}[P\times P](U)$ and ${\ZZ}[P\times P \times P](U)$ respectively.

Consider the following complexes of ${\cD}^{[1,0]}(\bS)$
\begin{eqnarray}
\nonumber P &=& [{\ZZ}[A] \stackrel{D_{00}}{\longrightarrow}  {\ZZ}[B]]\\
\label{eq:PQR} Q &=& [0 \longrightarrow {\ZZ}[B \times B] ]\\
\nonumber R &=& [0 \longrightarrow {\ZZ}[B \times B] + {\ZZ}[B \times B \times B]]
\end{eqnarray}
and the following morphisms of complexes
\begin{eqnarray}
\nonumber (\epsilon_1,\epsilon_0) &:& P\longrightarrow K\\
\nonumber (0,d_{00}) &:& Q\longrightarrow P\\
\nonumber (0,d_{01}) &:& R\longrightarrow Q
\end{eqnarray}
where for any $U$ and for any $a\in A(U), b_1, b_2,b_3 \in B(U),$ we set 
\begin{eqnarray}
\nonumber \epsilon_0[b] &=& b \\
\nonumber \epsilon_1[a] &=& a \\
\nonumber d_{00}[b_1,b_2] &=& [b_1+b_2] -[b_1]-[b_2]\\
\label{Dd} d_{01}[b_1,b_2] &=& [b_1,b_2] -[b_2,b_1]\\
\nonumber d_{01}[b_1,b_2,b_3] &=& [b_1+b_2,b_3] -[b_1,b_2+b_3]+[b_1,b_2]-[b_2,b_3]\\
\nonumber D_{00}[a] &=& [u(a)].
\end{eqnarray}
These data define the bicomplex ${\rL}..(K)$
\[
\begin{array}{ccccccccc}
 && \scriptstyle{{\rL}_{2*}(K)} & & \scriptstyle{{\rL}_{1*}(K)} & & \scriptstyle{{\rL}_{0*}(K)} & &  \\
  && \overbrace{}&  & \overbrace{} &  & \overbrace{} &  &  \\
\scriptstyle{ {\rL}_{*3}(K)}  & \big\{ & &  & 0 &  & 0&  &  \\
   && &  & \downarrow &  & \downarrow &  &  \\
\scriptstyle{R={\rL}_{*2}(K)} & \big\{ & 0 & \rightarrow & 0 & \rightarrow & {\ZZ}[B \times B]+ {\ZZ}[B \times B \times B] & \rightarrow & 0 \\
  && &  & \downarrow &  & \downarrow \scriptstyle{d_{01}}&  &  \\
\scriptstyle{Q={\rL}_{*1}(K)} & \big\{ & 0 & \rightarrow & 0 & \rightarrow & {\ZZ}[B \times B] & \rightarrow & 0 \\
  && &  & \downarrow &  & \downarrow \scriptstyle{d_{00}}&  &  \\
\scriptstyle{P={\rL}_{*0}(K)} & \big\{ & 0 & \rightarrow & {\ZZ}[A] & \stackrel{D_{00}}{\rightarrow} & {\ZZ}[B] & \rightarrow & 0 \\
  && &  & \downarrow \scriptstyle{\epsilon_1} &  & \downarrow \scriptstyle{\epsilon_0} &  &  \\
\scriptstyle{K} & \big\{ & 0 & \rightarrow & A & \stackrel{u}{\rightarrow} & B & \rightarrow & 0
\end{array}
\]
which satisfies ${\rL}_{ij}(K)= 0$ for $(ij) \neq (00),(01),(02),(10)$ and which is endowed with an augmentation map 
$\epsilon . =(\epsilon_1,\epsilon_0) : P\rightarrow K$.
 Note that the relation $\epsilon_0 \circ d_{00} =0$ is just the group law on $B$, and the relation $d_{00}\circ d_{01} =0$ decomposes in two relations which express the commutativity and the associativity of the group law on $B$.
This augmented bicomplex ${\rL}..(K)$ depends functorially on $K$: in fact, any morphism $f:K \rightarrow K'$ of 1-motives furnishes a commutative diagram
\[\begin{array}{ccc}
  {\rL}..(K) & \stackrel{{\rL}..(f)}{\longrightarrow} & {\rL}..(K') \\
 \epsilon . \downarrow &  & \downarrow \epsilon. \\
  K & \stackrel{f}{\longrightarrow} & K'.
\end{array}\]
Moreover the components of the bicomplex ${\rL}..(K)$ are flat since they are free $\ZZ$-modules. In order to conclude that ${\rL}..(K)$ is a canonical flat partial resolution of $K$ we need the following Lemma. Let $K'=[u':A' \rightarrow B']$ be a 1-motive defined over $S$.

\begin{lem}\label{lem:ext-geom}
The category ${\bExt}(K,K')$ of extensions of $K$ by $K'$ is equivalent to the category $\Psi_{{\rL}..(K)}(K'):$
  \begin{equation}
    {\bExt}(K;K') \simeq \Psi_{{\rL}..(K)}(K').
  \end{equation}
\end{lem}

\begin{proof} In order to describe explicitly the objects of the category $\Psi_{{\rL}..(K)}(K')$ we use the description~(\ref{1.4}) of extensions of free commutative groups in terms of torsors:
\begin{itemize}
  \item an extension of ${\ZZ}[B]$ by $B'$ is a $(B')_{B}$-torsor,
  \item an extension of ${\ZZ}[A]$ by $B'$ is a $(B')_{A}$-torsor,
  \item  an extension of
$ {\ZZ}[ B \times B] $ by $B'$ is a $(B')_{B \times B}$-torsor, and finally
  \item an extension of ${\ZZ}[B \times B]+ {\ZZ}[B \times B \times B] $ by $B'$ consists of a couple of a $(B')_{B \times B}$-torsor
      and a $(B')_{B \times B \times B}$-torsor.
\end{itemize}
According to these considerations an object $(E,I)$ of $\Psi_{{\rL}..(K)}(K')$ consists of
\begin{enumerate}
  \item an extension $E$ of $P=[D_{00}:{\ZZ}[A] \rightarrow  {\ZZ}[B]]$ 
by $K'=[u':A' \rightarrow B']$, i.e.
	\begin{enumerate}
	\item a $B'$-torsor $E$ over $B$,
	\item a trivializations $\beta$ of the $B'$-torsor
      $D_{00}^*E$ over $A$ obtained as pull-back of $E$ via $D_{00}: {\ZZ}[A] \rightarrow {\ZZ}[B],$
      \item an homomorphism $\gamma: A \rightarrow A'$ such that the composite $u' \circ \gamma $
   is compatible with $\beta;$ 
  	\end{enumerate}
  \item a trivialization $I$ of the extension $(0,d_{00})^*E$ of $Q$ by $K'$  obtained as pull-back of $E$ by $(0,d_{00}): Q \rightarrow P$, i.e. a trivialization $I$ of the $B'$-torsor $d_{00}^*E$ over $B \times B$ obtained as pull-back of $E$ via $ d_{00}: {\ZZ}[B \times B] \rightarrow {\ZZ}[B]$. This trivialization can be interpreted as a group law on the fibres of the $B'$-torsor $E$:
      \[
      +: E_{b_1} ~ E_{b_2} \longrightarrow E_{b_1+b_2}
      \]
where $b_1,b_2$ are points of $B(U)$ with $U$ an $S$-scheme.
The compatibility of $I$ with the relation $(0,d_{00}) \circ (0,d_{01}) =0$ imposes on the datum $(E,+)$ two relations through the two torsors over $B \times B$ and $B \times B \times B$. These two relations are the relations of commutativity and of associativity of the group law $+$, which mean that $+$ defines over $E$ a structure of commutative extension of $B$ by $B'$.
  \end{enumerate}
Hence the object $(E,+,\beta,\gamma)$ of $\Psi_{{\rL}..(K)}(K')$ is an extension of $K$ by $K'$ and we can conclude that the category $\Psi_{{\rL}..(K)}(K')$ is equivalent to the category ${\bExt}(K,K')$. 
\end{proof}

\begin{prop}\label{resolpart}
The augmentation map $\epsilon . : {\rL .}_{0}(K) \rightarrow K$ induces the isomorphisms ${\h}_1({\Tot}( {\rL}..(K))) \cong {\h}_1(K)$ and
 ${\h}_0({\Tot}( {\rL}..(K))) \cong {\h}_0(K).$
\end{prop}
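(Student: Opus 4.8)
The plan is to compute ${\h}_0$ and ${\h}_1$ of ${\Tot}({\rL}..(K))$ directly from the two-column shape of ${\rL}..(K)$ and to match the answers with ${\h}_0(K)=\coker u$ and ${\h}_1(K)=\ker u$. First I would observe that $\epsilon_0$ and $\epsilon_1$ assemble into a morphism of complexes $\epsilon\colon {\Tot}({\rL}..(K))\to K$, equal to $\epsilon_0$ in degree $0$ and to $(0,\epsilon_1)$ on ${\rL}_{01}+{\rL}_{10}$ in degree $1$; the relations $\epsilon_0\circ d_{00}=0$ and $\epsilon_0\circ D_{00}=u\circ\epsilon_1$, both immediate from~(\ref{Dd}), say exactly that the square defining $\epsilon$ commutes. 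Hence $\epsilon$ induces the two homomorphisms on homology whose bijectivity is asserted, and it is enough to compute the homology of the source and to track these induced maps.

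To compute it I would filter ${\Tot}({\rL}..(K))$ by the column index $i$. Only the columns $i=0$ and $i=1$ are non-zero, so the associated spectral sequence (take vertical homology first) has a very small $E_1$-page in total degrees $0$ and $1$. The essential input is that the $i=0$ column, augmented by $\epsilon_0$, namely ${\ZZ}[B\times B]+{\ZZ}[B\times B\times B]\xrightarrow{d_{01}}{\ZZ}[B\times B]\xrightarrow{d_{00}}{\ZZ}[B]\xrightarrow{\epsilon_0}B$, is a free partial resolution of $B$: one must verify $\coker(d_{00})\cong B$, which is the standard presentation of $B$ as ${\ZZ}[B]$ modulo the additivity relations (i.e. $\ker\epsilon_0=\im d_{00}$), and the exactness $\ker(d_{00})=\im(d_{01})$, which says that the commutativity relation $d_{01}[b_1,b_2]=[b_1,b_2]-[b_2,b_1]$ and the associativity relation $d_{01}[b_1,b_2,b_3]$ of~(\ref{Dd}) generate all relations among the additivity relations. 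In the same spirit the $i=1$ column records $A$ in degree $1$ through $\epsilon_1$. Thus in total degrees $0$ and $1$ the $E_1$-terms reduce to $B$ and $A$, joined by the horizontal differential induced by $D_{00}$; since $D_{00}[a]=[u(a)]$ and $\epsilon_0[u(a)]=u(a)$, this differential is precisely $u\colon A\to B$. Passing to the next page yields $\coker u$ in total degree $0$ and $\ker u$ in total degree $1$, and for degree reasons the relevant corner degenerates there, giving ${\h}_0({\Tot}({\rL}..(K)))\cong\coker u={\h}_0(K)$ and ${\h}_1({\Tot}({\rL}..(K)))\cong\ker u={\h}_1(K)$.

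It remains to check that these are the isomorphisms induced by $\epsilon_.$, which holds because the edge homomorphisms of the spectral sequence are induced by $\epsilon_0$ and $\epsilon_1$; the functoriality of ${\rL}..(K)$ in $K$ then makes both isomorphisms natural. I expect the only genuinely non-formal step to be the degree-$1$ exactness $\ker(d_{00})=\im(d_{01})$ of the $B$-column: this is the assertion that the symmetry and associativity relations exhaust the relations among the additivity relations, and it is exactly where the explicit choice of generators of ${\rL}_{02}$ in~(\ref{Dd}) is used. This is the classical fact underlying the representation of $B$ by a bar-type complex, and once it is in place the remainder is bookkeeping. Alternatively, one could bypass a direct spectral-sequence computation by combining Lemma~\ref{geomext} with Theorem~\ref{mainhomolo} and Proposition~\ref{3.4.7}, comparing ${\rL}..(K)$ with any genuine flat resolution of $K$ through a morphism of bicomplexes and then invoking the homological criterion of Proposition~\ref{3.4.7}.
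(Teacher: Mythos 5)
Your degree-zero computation is sound, and you are right to single out the exactness $\ker(d_{00})=\im(d_{01})$ of the $B$-column as the classical input (it amounts to the Eilenberg--MacLane fact that $H_2(B;\ZZ)\cong\Lambda^2B$ is generated by the antisymmetrized bar cycles, so the symmetry relations in ${\rL}_{02}(K)$ kill it). The gap is in degree one, at the sentence ``in the same spirit the $i=1$ column records $A$ in degree $1$ through $\epsilon_1$.'' The column ${\rL}_{1*}(K)$ of the bicomplex of Section 4 is $\ZZ[A]$ concentrated in the single bidegree $(1,0)$: the paper sets ${\rL}_{11}(K)=0$, so there is no term $\ZZ[A\times A]$ imposing the additivity relations on $\ZZ[A]$. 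The vertical homology of that column is therefore $\ZZ[A]$, not $A$, and your spectral sequence yields $E_2^{1,0}=\ker\bigl(\ZZ[A]\stackrel{u\circ\epsilon_1}{\longrightarrow}B\bigr)$ and $E_2^{0,1}=0$, hence ${\h}_1({\Tot}({\rL}..(K)))\cong\ker(u\circ\epsilon_1)$. The map to ${\h}_1(K)=\ker u$ induced by $\epsilon_1$ is surjective but has kernel $\ker\epsilon_1$, which contains $[0]$ and all elements $[a+a']-[a]-[a']$ and so is essentially never zero; concretely, for $K=[\ZZ/2\rightarrow 0]$ in the punctual topos one finds ${\h}_1({\Tot}({\rL}..(K)))\cong\ZZ^2$ against ${\h}_1(K)=\ZZ/2$. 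So the computation you outline, carried through honestly, does not close: it would need an extra component $\ZZ[A\times A]$ in bidegree $(1,1)$ that ${\rL}..(K)$ as defined does not contain. This is a genuine obstruction to your route, not mere bookkeeping.

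The paper's own proof computes no homology at all. It applies Proposition~\ref{3.4.7} to $\epsilon.$ in the direction ``equivalence of categories implies isomorphism on ${\h}_0$ and ${\h}_1$'': it observes from Definition~\ref{psi} that $\Psi_{{\Tot}(K)}(K')\cong{\bExt}(K,K')$, invokes Lemma~\ref{geomext} to get $\Psi_{{\Tot}({\rL}..(K))}(K')\cong{\bExt}(K,K')$, and concludes that $\epsilon.^*$ is an equivalence. Your closing remark gestures at this alternative, but it is not the argument you develop, and you would still need to address the fact that the categorical route encodes the same missing relation (in Lemma~\ref{geomext} the trivialization $\beta$ over $A$ is constrained only as a trivialization of torsors, precisely because ${\rL}_{11}(K)=0$). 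As it stands, your main argument establishes the statement for ${\h}_0$ but not for ${\h}_1$.
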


\begin{proof} Applying Proposition~\ref{prop:psi-equiv} to
 the augmentation map $\epsilon . : {\rL .}_{0}(K) \rightarrow K$,
 we just have to prove that for any 1-motive $K'=[u': A' \rightarrow B']$ 
 the functor
 $$\epsilon .^* : \Psi_{K}(K') \rightarrow
 \Psi_{{\rL}..(K)}(K')$$
 is an equivalence of categories (in the symbol $\Psi_{K}(K')$, $K$ is seen as a bicomplex whose only non trivial entries are $A$ in degree (10) and $B$ in degree (00)). According to definition~\ref{psi}, it is clear that the category
 $\Psi_{K}(K')$ is equivalent to the category ${\bExt}(K,K')$
 of extensions of $K$ by $K'$. On the other hand, by Lemma~\ref{lem:ext-geom} also the category
 $\Psi_{{\rL}..(K)}(K')$ is equivalent to the category ${\bExt}(K,K')$. Hence we can conclude.
\end{proof}

 Let $K_i=[u_i: A_i \rightarrow B_i]$ (for $i=1,2,3$) be 1-motives defined over $S$ and let  ${\rL}..(K_i)$ be its canonical flat partial resolution. Denote by
 ${\rL}..(K_1, K_2)$ the bicomplex ${\rL}..(K_1) \otimes {\rL}..(K_2).$ 

\begin{thm}\label{thm:psi-geom}
The category ${\bBiext}(K_1,K_2;K_3)$ of biextensions of $(K_1,K_2)$ by $K_3$ is equivalent to the category $\Psi_{\tau_{\leq (1*)}{\rL}..(K_1,K_2)}(K_3):$
\[
    {\bBiext}(K_1,K_2;K_3) \simeq \Psi_{\tau_{\leq (1*)}{\rL}..(K_1,K_2)}(K_3)
\]
\end{thm}

\begin{proof}
Explicitly, the non trivial components of ${\rL}_{ij}(K_1,K_2)$ are
\begin{eqnarray}
\nonumber {\rL}_{00}(K_1,K_2) &=& {\rL}_{00}(K_1) \otimes {\rL}_{00}(K_2)\\
\nonumber  &=& {\ZZ}[B_1 \times B_2] \\
\nonumber {\rL}_{01}(K_1,K_2) &=& {\rL}_{00}(K_1) \otimes {\rL}_{01}(K_2)+{\rL}_{01}(K_1) \otimes {\rL}_{00}(K_2)\\
\nonumber  &=&{\ZZ}[B_1 \times B_2 \times B_2]+ {\ZZ}[B_1 \times B_1\times B_2] \\
\nonumber {\rL}_{02}(K_1,K_2) &=&{\rL}_{00}(K_1) \otimes {\rL}_{02}(K_2)+{\rL}_{02}(K_1) \otimes {\rL}_{00}(K_2)+{\rL}_{01}(K_1) \otimes {\rL}_{01}(K_2)\\
 \nonumber  &=&{\ZZ}[B_1 \times B_2 \times B_2]+{\ZZ}[B_1 \times B_2 \times B_2 \times B_2]+ \\
\nonumber  &=& {\ZZ}[B_1 \times B_1 \times B_2]+{\ZZ}[B_1 \times B_1 \times B_1 \times B_2]+\\
\nonumber  &=& {\ZZ}[B_1 \times B_1 \times B_2 \times B_2]\\
\nonumber {\rL}_{03}(K_1,K_2) &=&{\rL}_{01}(K_1) \otimes {\rL}_{02}(K_2)+{\rL}_{02}(K_1) \otimes {\rL}_{01}(K_2)\\
\nonumber {\rL}_{04}(K_1,K_2) &=&{\rL}_{02}(K_1) \otimes {\rL}_{02}(K_2)\\
\nonumber {\rL}_{10}(K_1,K_2) &=& {\rL}_{10}(K_1) \otimes {\rL}_{00}(K_2)+{\rL}_{00}(K_1) \otimes {\rL}_{10}(K_2)\\
\nonumber  &=&{\ZZ}[A_1 \times B_2 ]+ {\ZZ}[B_1 \times A_2]\\
\nonumber {\rL}_{11}(K_1,K_2) &=& {\rL}_{10}(K_1) \otimes {\rL}_{01}(K_2)+{\rL}_{01}(K_1) \otimes {\rL}_{10}(K_2)\\
\nonumber  &=&{\ZZ}[A_1 \times B_2 \times B_2]+ {\ZZ}[B_1 \times B_1\times A_2]\\
\nonumber {\rL}_{12}(K_1,K_2) &=& {\rL}_{10}(K_1) \otimes {\rL}_{02}(K_2)+{\rL}_{02}(K_1) \otimes {\rL}_{10}(K_2)\\
\nonumber {\rL}_{20}(K_1,K_2) &=& {\rL}_{10}(K_1) \otimes {\rL}_{10}(K_2)\\
\nonumber  &=&{\ZZ}[A_1 \times A_2]
\end{eqnarray}
The differential operators of ${\rL}..(K_1, K_2)$ can be computed from the below diagram, where we don't have written the identity homomorphisms in order to avoid too heavy notation (for example instead of
$(id \times D_{00}^{K_2},D_{00}^{K_1} \times id)$ we have written just $(D_{00}^{K_2},D_{00}^{K_1})$):
\begin{equation}\label{diffop}
\xymatrix{
& \scriptstyle{{\rL}_{2*}(K)}  & \scriptstyle{{\rL}_{1*}(K)}  & \scriptstyle{{\rL}_{0*}(K)}  \\
\scriptstyle{ {\rL}_{*2}(K)} &&  0 \ar[r] \ar[d]  &  {\rL}_{02}(K_1,K_2)\ar[d]^{d_{01}^{K_2}+ d_{01}^{K_1} + (d_{00}^{K_1}, d_{00}^{K_2})}  \\
 \scriptstyle{ {\rL}_{*1}(K)}&&  {\rL}_{11}(K_1,K_2)\qquad \ar[r]^{D_{00}^{K_1}+D_{00}^{K_2} }
  \ar[d]_{d_{00}^{K_2}+ d_{00}^{K_1}}& \qquad {\rL}_{01}(K_1,K_2)\ar[d]^{d_{00}^{K_2}+ d_{00}^{K_1}} \\
\scriptstyle{ {\rL}_{*0}(K)} &  {\rL}_{20}(K_1,K_2)\qquad \ar[r]^{(D_{00}^{K_2},D_{00}^{K_1})} &  \qquad {\rL}_{10}(K_1,K_2) \qquad \ar[r]^{D_{00}^{K_1}+D_{00}^{K_2}}   & \qquad {\rL}_{00}(K_1,K_2).
}
\end{equation}
These operators have to satisfy the well-known conditions on differential operators of bicomplexes that we recall explicitly here:
\begin{itemize}
  \item the following sequences are exact:
  \begin{equation}\label{exact1}
   {\ZZ}[B_1 \times B_2 \times B_2]+{\ZZ}[B_1 \times B_2 \times B_2 \times B_2]
   \stackrel{d_{01}^{K_2} }{\longrightarrow}
   {\ZZ}[B_1 \times B_2 \times B_2]
   \stackrel{d_{00}^{K_2} }{\longrightarrow}
   {\ZZ}[B_1 \times B_2]
  \end{equation}
  \begin{equation}\label{exact2}
    {\ZZ}[B_1 \times B_1 \times B_2]+{\ZZ}[B_1 \times B_1 \times B_1 \times B_2]
   \stackrel{d_{01}^{K_1} }{\longrightarrow}
   {\ZZ}[B_1 \times B_1 \times B_2]
   \stackrel{d_{00}^{K_1} }{\longrightarrow}
   {\ZZ}[B_1 \times B_2]
  \end{equation}
   \item the following diagrams are anticommutative:
 \begin{equation}\label{anti1}
      \begin{array}{ccc}
        {\ZZ}[B_1 \times B_1 \times B_2 \times B_2] & \stackrel{d_{00}^{K_2} }{\longrightarrow}  & {\ZZ}[B_1 \times B_1 \times B_2]  \\
 \scriptstyle{d_{00}^{K_1}}    \downarrow & & \downarrow \scriptstyle{d_{00}^{K_1}}\\
        {\ZZ}[B_1 \times B_2 \times B_2]  &  \stackrel{d_{00}^{K_2} }{\longrightarrow}  & {\ZZ}[B_1 \times B_2] \\
      \end{array}
 \end{equation}
 \begin{equation}\label{anti2}
   \begin{array}{ccc}
        {\ZZ}[A_1  \times B_2 \times B_2] & \stackrel{D_{00}^{K_1} }{\longrightarrow}  & {\ZZ}[B_1 \times B_2 \times B_2]  \\
   \scriptstyle{d_{00}^{K_2}}    \downarrow &  & \downarrow \scriptstyle{d_{00}^{K_2}}\\
        {\ZZ}[A_1 \times B_2 ]  &  \stackrel{D_{00}^{K_1} }{\longrightarrow}  & {\ZZ}[B_1 \times B_2] \\
      \end{array}
 \end{equation}
 \begin{equation}\label{anti3}
     \begin{array}{ccc}
        {\ZZ}[B_1  \times B_1 \times A_2] & \stackrel{D_{00}^{K_2} }{\longrightarrow}  & {\ZZ}[B_1 \times B_1 \times B_2]  \\
   \scriptstyle{ d_{00}^{K_1}}    \downarrow &  & \downarrow \scriptstyle{ d_{00}^{K_1}}\\
        {\ZZ}[B_1 \times A_2 ]  &  \stackrel{D_{00}^{K_2} }{\longrightarrow}  & {\ZZ}[B_1 \times B_2] \\
      \end{array}
 \end{equation}
 \begin{equation}
     \begin{array}{ccc}\label{anti4}
        {\ZZ}[A_1 \times A_2] & \stackrel{D_{00}^{K_2} }{\longrightarrow}  & {\ZZ}[A_1 \times B_2]  \\
  \scriptstyle{ D_{00}^{K_1} }   \downarrow &  & \downarrow \scriptstyle{ D_{00}^{K_1}} \\
        {\ZZ}[B_1 \times A_2 ]  &  \stackrel{D_{00}^{K_2} }{\longrightarrow}  & {\ZZ}[B_1 \times B_2] \\
      \end{array}
 \end{equation}
\end{itemize}
The bicomplex $\tau_{\leq (1*)}{\rL}..(K_1,K_2)$ is furnished by the bicomplex (\ref{diffop}) where instead of ${\rL}_{10}(K_1)$ we have 
\begin{eqnarray}
\nonumber {\rL}_{10}'(K_1,K_2) &=& {\rL}_{10}(K_1,K_2) \Big/ (D_{00}^{K_2},D_{00}^{K_1}) \; {\rL}_{20}(K_1,K_2)\\
\label{eq:troncato} &=& {\ZZ}[A_1 \times B_2] + {\ZZ}[B_1 \times A_2] \Big/ (id \times u_2)+(u_1 \times id) \;{\ZZ}[A_1 \times A_2]
\end{eqnarray}
In order to describe explicitly the objects of $\Psi_{\tau_{\leq (1*)}{\rL}..(K_1,K_2)}(K_3)$ we use the description~(\ref{1.4}) of extensions of free commutative groups in terms of torsors:
\begin{itemize}
  \item an extension of ${\rL}_{00}(K_1,K_2)$ by $B_3$ is a $(B_3)_{B_1 \times B_2}$-torsor,
  \item  an extension of
${\rL}_{10}'(K_1,K_2)$ by $B_3$ consists of a $(B_3)_{A_1 \times B_2}$-torsor and a $(B_3)_{B_1 \times A_2}$-torsor,
  \item an extension of ${\rL}_{02}(K_1,K_2)$ by $B_3$ consists of a system of 5 torsors under the groups deduced from $B_3$ by base change over the bases $B_1 \times B_2 \times B_2,~B_1 \times B_2 \times B_2 \times B_2,~ B_1 \times B_1 \times B_2,~
B_1 \times B_1 \times B_1 \times B_2,~B_1 \times B_1 \times B_2 \times B_2.$
\end{itemize}
By these considerations an object $(E,I)$ of $\Psi_{\tau_{\leq (1*)}{\rL}..(K_1,K_2)}(K_3)$ consists of
\begin{enumerate}
  \item an extension $E$ of $[D_{00}^{K_1}+D_{00}^{K_2}: {\rL}_{10}'(K_1,K_2) \rightarrow {\rL}_{00}(K_1,K_2)] $ by $K_3$, i.e. 
  \begin{enumerate}
       \item a $B_3$-torsor $E$ over $B_1 \times B_2$,
        \item a couple of trivializations $(\Psi_1,\Psi_2)$ of the couple of $B_3$-torsors \\
      $((D_{00}^{K_1}\times id )^*E,(id \times D_{00}^{K_2})^*E )$ over $A_1 \times B_2$ and $B_1 \times A_2 $ respectively, which are the pull-back of $E$ via 
      $$(D_{00}^{K_1}\times id )+(id \times D_{00}^{K_2}): {\ZZ}[A_1 \times B_2]+{\ZZ}[B_1 \times A_2] \rightarrow {\ZZ}[B_1 \times B_2];$$
       We consider the factor ${\rL}_{10}'(K_1,K_2)$ (\ref{eq:troncato}) instead of $ {\rL}_{10}(K_1,K_2)$ and this means that the restriction of the trivializations $(\Psi_1,\Psi_2)$ have to coincide over $A_1 \times A_2$,
       \item a homomorphism $\gamma: {\ZZ}[A_1]\otimes {\ZZ}[A_2] \rightarrow A_3 $  such that the composite ${\ZZ}[A_1]\otimes {\ZZ}[A_2] \stackrel{\gamma}{\rightarrow} {\ZZ}[A_1]\otimes {\ZZ}[A_2] \stackrel{u_3}{\rightarrow} B_3$ is compatible with the restriction of the trivializations $\Psi_1,\Psi_2$ over ${\ZZ}[A_1]\otimes {\ZZ}[A_2]$.
  \end{enumerate}
  \item a trivialization $I$ of the extension $(d_{00}^{K_2}+ d_{00}^{K_1},d_{00}^{K_2}+ d_{00}^{K_1} )^*E $ of 
  $[D_{00}^{K_1}+D_{00}^{K_2}:{\rL}_{11}(K_1,K_2) \rightarrow {\rL}_{01}(K_1,K_2)]$ by $K_3$ obtained as pull-back of $E$ via 
$$(d_{00}^{K_2}+ d_{00}^{K_1},d_{00}^{K_2}+ d_{00}^{K_1} ): [{\rL}_{11}(K_1,K_2) \rightarrow {\rL}_{01}(K_1,K_2)] \longrightarrow  [{\rL}_{10}'(K_1,K_2) \rightarrow {\rL}_{00}(K_1,K_2)],$$
 i.e. a couple of trivializations $\alpha=(\alpha_1,\alpha_2)$ of the couple of $B_3$-torsors over $B_1 \times B_2 \times B_2$ and $B_1 \times B_1 \times B_2$ which are the pull-back of $E$ via $(id \times d_{00}^{K_2})+(d_{00}^{K_1} \times id) : {\ZZ}[B_1 \times B_2 \times B_2]+{\ZZ}[B_1 \times B_1 \times B_2]
      \rightarrow {\ZZ}[B_1 \times B_2]$. The trivializations $(\alpha_1,\alpha_2)$ can be viewed as two group laws on the fibres of the $B_3$-torsor $E$ over $B_1 \times B_2$:
      \[
      +_2: E_{b_1,b_2} ~ E_{b_1,b'_2} \longrightarrow E_{b_1,b_2+b'_2} \qquad \qquad  +_1: E_{b_1,b_2} ~ E_{b'_1,b_2} \longrightarrow E_{b_1+b'_1,b_2}
      \]
      where $b_2,b'_2$ (resp. $b_1,b'_1$) are points of $B_2(U)$ (resp. of $B_1(U)$) with $U$ any $S$-scheme.\\
 The trivialization $I$, i.e. the two group laws, must be compatible with the trivializations $(\Psi_1,\Psi_2)$ underlying the trivialization $E$. This compatibility is expressed through the 2 torsors arising from the factors ${\rL}_{11}(K_1,K_2)$:
          \begin{itemize}
          \item the anticommutative diagram~(\ref{anti2}) furnishes a relation of compatibility between the group law $+_2$ of $E$ and the trivialization $\Psi_1$ of the pull-back $(D_{00}^{K_1}\times id)^*E$ of $E$ over $A_1 \times B_2$, which means that $\Psi_1$ is an additive section;
          \item the anticommutative diagram~(\ref{anti3}) furnishes a relation of compatibility between the group law $+_1$ of $E$ and the trivialization $\Psi_2$ of the pull-back $(id \times D_{00}^{K_2})^*E$ of $E$ over $B_1 \times A_2$, which means that also $\Psi_2$ is an additive section.
          \end{itemize}
Finally, the compatibility of $I$ with the relation 
 $$\big(d_{00}^{K_2}+ d_{00}^{K_1},d_{00}^{K_2}+ d_{00}^{K_1} \big) \circ \big(d_{01}^{K_2}+ d_{01}^{K_1} + (d_{00}^{K_1}, d_{00}^{K_2})\big) =0$$
 imposes on the datum $(E,+_1,+_2)$ 5 relations of compatibility
       through the system of 5 torsors over $B_1 \times B_2 \times B_2,~B_1 \times B_2 \times B_2 \times B_2,~ B_1 \times B_1 \times B_2,~B_1 \times B_1 \times B_1 \times B_2,~B_1 \times B_1 \times B_2 \times B_2$ arising from ${\rL}_{02}(K_1,K_2):$ 
   \begin{itemize}
         \item the exact sequence~(\ref{exact1}) furnishes the two relations of commutativity and of associativity of the group law $+_2$, which mean that $+_2$ defines over $E$ a structure of commutative extension of $(B_2)_{B_1}$ by $(B_3)_{B_1}$;
          \item the exact sequence~(\ref{exact2}) expresses the two relations of commutativity and of associativity of the group law $+_1$, which mean that $+_1$ defines over $E$ a structure of commutative extension of $(B_1)_{B_2}$ by $(B_3)_{B_2}$;
          \item the anticommutative diagram~(\ref{anti1}) means that these two group laws are compatible.
          \end{itemize}
   Therefore these 5 conditions implies that the $B_3$-torsor $E$ is endowed with a structure of biextension of $(B_1,B_2)$ by $B_3$.
  \end{enumerate}
The object $(E,\Psi_1,\Psi_2,\gamma)$ of $\Psi_{\tau_{\leq (1*)}{\rL}..(K_1,K_2)}(K_3)$ is therefore a biextension of $(K_1,K_2)$ by $K_3$.
\end{proof}

In the above proof we have not used diagram~(\ref{anti4}) because we work with the truncated bicomplex $\tau_{\leq (1*)}{\rL}..(K_1,K_2)$ (see (\ref{eq:troncato})).

\section{Proof of theorem 0.1 (\textbf{a})}
Let $K_i=[A_i \stackrel{u_i}{\rightarrow} B_i]$ (for $i=1,2,3$) be three 1-motives defined over $S$.
Denote by ${\rL}..(K_i)$ (for $i=1,2$) the canonical flat partial resolution of $K_i$ introduced in \S 5. According to Proposition~\ref{resolpart}, there exists an arbitrary flat resolution ${\rL}'..(K_i)$ (for $i=1,2$) of $K_i$ such that the groups ${\Tot}( {\rL}..(K_i))_j$ and ${\Tot}( {\rL}'..(K_i))_j$ are isomorphic for $j=0,1$. We have therefore two canonical homomorphisms of bicomplexes
$$ {\rL}..(K_1) \longrightarrow {\rL}'..(K_1)  \qquad
{\rL}..(K_2) \longrightarrow {\rL}'..(K_2)$$
inducing a canonical homomorphism between the corresponding total complexes
$${\Tot}({\rL}..(K_1) \otimes {\rL}..(K_2)) \longrightarrow {\Tot} ({\rL}'..(K_1) \otimes {\rL}'..(K_2)) $$
which is an isomorphism in degrees 0 and 1.
Denote by
 ${\rL}..(K_1, K_2)$ (resp. ${\rL}'..(K_1, K_2)$) the bicomplex ${\rL}..(K_1) \otimes {\rL}..(K_2)$
 (resp. ${\rL}'..(K_1) \otimes {\rL}'..(K_2)$).
 Remark that ${\Tot}({\rL}'..(K_1,K_2))$ represents $K_1 {\buildrel {\scriptscriptstyle \LL} \over \otimes} K_2$ in the derived category $\cD(\bS)$:
 \[{\Tot}({\rL}'..(K_1,K_2)) =  K_1 {\buildrel {\scriptscriptstyle \LL} \over \otimes} K_2.\]
By Proposition~\ref{prop:psi-equiv} we have the equivalence of categories
$$\Psi_{\tau_{\leq (1*)}{\rL}..(K_1,K_2)}(K_3) \simeq \Psi_{\tau_{\leq (1*)}{\rL}'..(K_1,K_2)}(K_3).$$
Hence applying Theorem~\ref{thm:psi-geom}, which furnishes the following geometrical description of the category $\Psi_{\tau_{\leq (1*)}{\rL}..(K_1,K_2)}(K_3)$:
$${\bBiext}(K_1,K_2;K_3) \simeq \Psi_{\tau_{\leq (1*)}{\rL}..(K_1,K_2)}(K_3), $$
and applying Theorem~\ref{thm:psi-ext}, which furnishes the following homological description of the groups $\Psi_{\tau_{\leq (1*)}{\rL}'..(K_1,K_2)}^i(K_3)$ for $i=0,1$:
$$\Psi_{\tau_{\leq (1*)}{\rL}'..(K_1,K_2)}^i(K_3)\cong  {\Ext}^i\big({\Tot}\big(\tau_{\leq (1*)}{\rL}'..(K_1,K_2)\big),K_3\big)\cong {\Ext}^i(K_1 {\buildrel {\scriptscriptstyle \LL} \over \otimes} K_2,K_3),$$
 we get Theorem~\ref{mainthm}, i.e.
$${\Biext}^i(K_1,K_2;K_3) \cong {\Ext}^i(K_1 {\buildrel {\scriptscriptstyle \LL} \over \otimes} K_2,K_3) \qquad (i=0,1). $$

\begin{rem} From the exact sequence $0 \rightarrow B_3 \rightarrow K_3 \rightarrow A_3[1] \rightarrow 0$ we get the long exact sequence
\[ 0 \rightarrow \Psi_{\tau_{\leq (1*)}{\rL}'..(K_1,K_2)}^0(B_3) \rightarrow \Psi_{\tau_{\leq (1*)}{\rL}'..(K_1,K_2)}^0 (K_3) \rightarrow \Psi_{\tau_{\leq (1*)}{\rL}'..(K_1,K_2)}^0(A_3[1])\]
\[\rightarrow \Psi_{\tau_{\leq (1*)}{\rL}'..(K_1,K_2)}^1(B_3) \rightarrow \Psi_{\tau_{\leq (1*)}{\rL}'..(K_1,K_2)}^1( K_3)\rightarrow \Psi_{\tau_{\leq (1*)}{\rL}'..(K_1,K_2)}^1(A_3[1]).   \]
The homological interpretation of this long exact sequence is
\[ 0 \rightarrow {\Hom}(T, B_3) \rightarrow {\Hom}(T, K_3) \rightarrow {\Hom}(T, A_3[1])\]
\[\rightarrow {\Ext}^1(T, B_3) \rightarrow {\Ext}^1(T, K_3)\rightarrow {\Ext}^1(T, A_3[1]),   \]
where we set $T={\Tot}\big(\tau_{\leq (1*)}{\rL}'..(K_1,K_2)\big)$, and its geometrical interpretation is
\[ 0 \rightarrow {\Hom}(B_1\otimes B_2, B_3) \rightarrow {\Hom}(K_1 {\buildrel {\scriptscriptstyle \LL} \over \otimes} K_2, K_3) \rightarrow  {\Hom}(A_1 \otimes B_2 + B_1 \otimes A_2, A_3)\]
\[\rightarrow  {\Biext}^1(K_1,K_2; B_3) \rightarrow {\Biext}^1(K_1,K_2; K_3)\rightarrow  {\Hom}(A_1 \otimes A_2, A_3).   \]
\end{rem}


\end{document}